\documentclass{amsart}

\usepackage{amssymb}
\usepackage{eqlist, color, mathtools}
\usepackage{mathrsfs, textcomp}
\usepackage{graphicx}
\usepackage{bm}
\usepackage{enumitem}

\usepackage{amsmath,amsfonts,amsthm,latexsym}
\usepackage[all]{xy}

\setlength{\textwidth}{154mm}
\setlength{\textheight}{7.4in}
\oddsidemargin = 13pt
\evensidemargin = 13pt
\headsep = 50pt

\newtheorem{theorem}{Theorem}[section]
\newtheorem{lemma}[theorem]{Lemma}
\newtheorem{proposition}[theorem]{Proposition}

\theoremstyle{definition}
\newtheorem{definition}[theorem]{Definition}
\newtheorem{example}[theorem]{Example}

\newtheorem{remark}[theorem]{Remark}
\newtheorem{notation}[theorem]{Notation}

\newtheorem{observations}[theorem]{Observations}

\numberwithin{equation}{section}

%---------------------------------------------------------------------------

\newcommand{\C}{\mathbb{C}}

\newcommand{\N}{\mathbb{N}}

\newcommand{\R}{\mathbb{R}}
\newcommand{\Z}{\mathbb{Z}}

\newcommand{\cI}{\mathcal{I}}

\newcommand{\cM}{\mathcal{M}}

\newcommand{\cP}{\mathcal{P}}
\newcommand{\cR}{\mathcal{R}}

\newcommand{\fC}{{\mathfrak C}}

\newcommand{\sC}{\mathscr{C}}

\newcommand{\sZ}{\mathscr{Z}}

\newcommand{\ssm}{\smallsetminus}
\newcommand{\pperp}{\perp\perp}

\DeclareMathOperator{\Min}{Min}

\DeclareMathOperator{\Max}{Max}

\DeclareMathOperator{\coz}{coz}

\DeclareMathOperator{\Cl}{cl}

\DeclareMathOperator{\Spec}{Spec}
\DeclareMathOperator{\id}{id}

\DeclareMathOperator{\Val}{Val}
\DeclareMathOperator{\Clop}{Clop}

\DeclareMathOperator{\M}{(M)}
\DeclareMathOperator{\A}{(Ab)}
\DeclareMathOperator{\Y}{(Y)}
\DeclareMathOperator{\Arch}{(Arch)}
\DeclareMathOperator{\Idl}{Id}
\DeclareMathOperator{\idl}{id}

\newcommand{\ra}{\rightarrow}
\newcommand{\Mart}{Mart\'{i}nez }
\newcommand{\Marti}{Mart\'{i}nez}

\pagestyle{myheadings}
%-----------------------------------------
\theoremstyle{definition}
\theoremstyle{definition}
\theoremstyle{definition}
\theoremstyle{definition}
\theoremstyle{definition}
\theoremstyle{definition}
\theoremstyle{remark}
\theoremstyle{definition}
\theoremstyle{definition}

\begin{document}
\title[$\M$, $\Y$, and $G+B$]{Semi-boolean and Yosida $\ell$-groups, Mart\'{i}nez and Yosida frames, and the $G+B$ construction}

%Applications of Mart\'{i}nez frames, Yosida frames, and the $G+B$ construction to lattice-ordered groups

%Semi-boolean $\ell$-groups, Mart\'{i}nez and Yosida frames, and the $G+B$ construction

%Semi-boolean $\ell$-groups, Mart\'{i}nez frames, and the $G+B$ construction

%Semi-boolean lattice-ordered groups and the $G+B$ construction

%Concerning semi-boolean lattice-ordered groups

%Applications of Mart\'{i}nez frames, Yosida frames, and the $G+B$ construction to semi-boolean lattice-ordered groups

%Bigard's semi-boolean $\ell$-groups via Mart\'{i}nez frames, Yosida frames, and the $G+B$ construction

%Bigard's semi-boolean $\ell$-groups: Mart\'{i}nez frames, Yosida frames, and the $G+B$ construction

%Concerning Bigard's semi-boolean $\ell$-groups

%Bigard's semi-boolean $\ell$-groups and the G+B construction

%Lattice-ordered groups: coincidence of types of ideals and the G+B construction

\author{Papiya Bhattacharjee, \quad Anthony W. Hager, \quad Warren Wm. McGovern, \quad Brian Wynne   }

\address{Department of Mathematical Sciences, Charles E. Schmidt College of Science, Florida Atlantic University, Boca Raton, FL 33431, USA}
\email{pbhattacharjee@fau.edu (P. Bhattacharjee)}

\address{Department of Mathematics and Computer Science, Wesleyan University, Middletown, CT 06459, USA}
\email{ahager@wesleyan.edu (A. W. Hager)}

\address{Wilkes Honors College, Florida Atlantic University, Jupiter, FL 33458, USA}
\email{warren.mcgovern@fau.edu (W. Wm. McGovern)}

\address{Department of Mathematics, Lehman College, CUNY, Gillet Hall, Room 211, 250 Bedford Park Blvd. West, Bronx, NY 10468, USA}
\email{brian.wynne@lehman.cuny.edu (B. Wynne)}

\subjclass[2020]{06F20, 46E25, 06D22, 08C05, 54D80}
\keywords {lattice-ordered group, prime subgroup, $d$-subgroup, principal polar}

%%% ----------------------------------------------------------------------

%\let\thefootnote\relax\footnote{*Corresponding author}

\begin{abstract}
The class of semi-boolean $\ell$-groups was introduced in 1968 by A. Bigard. These are the $\ell$-groups $G$ in which the principal convex $\ell$-subgroup $G(a)$ generated by any $a \in G$ is equal to the polar $a^{\pperp}$. Examples include all hyperarchimedean $\ell$-groups and all existentially closed abelian $\ell$-groups. Ordered by inclusion, the set of convex $\ell$-subgroups of a semi-boolean $\ell$-group is a \Mart frame (an algebraic frame with FIP in which every element is a $d$-element). Related are the Yosida $\ell$-groups, i.e., the $\ell$-groups whose frame of convex $\ell$-subgroups is a Yosida frame (an algebraic frame with FIP in which every compact element is a meet of maximal elements). Applying results on \Mart frames and Yosida frames, we obtain new characterizations of the semi-boolean and Yosida $\ell$-groups, show that the former constitute a radical class and the latter do not,  and present new examples with special properties. To build some of our examples, we introduce the $G+B$ construction for $\ell$-groups, an adaptation of the $A+B$ construction from commutative algebra.
\end{abstract}
%%%------------------------------------------------------------------

%A  $d$-subgroup of a lattice-ordered group ($\ell$-group) $G$ is a subgroup that contains the principal polars generated by each of its elements. We call $G$ a \Mart $\ell$-group if every convex $\ell$-subgroup of $G$ is a $d$-subgroup (equivalently: $G(a) = a^{\perp \perp}$ for every $a \in G$); known examples include all hyperarchimedean $\ell$-groups and all existentially closed abelian $\ell$-groups.

\maketitle

%--------------------------------------------------------------
\thispagestyle{empty}

\section{Introduction}\label{sect0}

Let $G$ be a lattice-ordered group ($\ell$-group, for the sake of abbreviation). A subgroup $H$ of $G$ is called a $z$-subgroup if $a^{\pperp} \subseteq H$ for every $a \in H$. In his 1968 note \cite{bigard}, Bigard introduces the notion of a $z$-subgroup and announces the equivalence of the following three statements:
\begin{itemize}
\item[(1)] For every $a \in G$, $G(a) = a^{\pperp}$.
\item[(2)] Every convex $\ell$-subgroup of $G$ is a $z$-subgroup.
\item[(3)] For all $a,b \in G$, $G(a) = G(b)$ if and only if $a^{\pperp} = b^{\pperp}$.
\end{itemize}
Moreover, he calls an $\ell$-group semi-boolean if it satisfies (1)--(3), and he observes that an $\ell$-group is hyperarchimedean if and only if it is semi-boolean and projectable. The equivalence of (1)--(3) is proved in  \cite[Proposition 3.3.10]{BKW77} and in \cite[Proposition 15.5]{darnel}, but neither of those books contains any further information about semi-boolean $\ell$-groups (and neither mentions them by name). Indeed, we are aware of no publication since \cite{bigard} that specifically analyzes the semi-boolean $\ell$-groups. In this paper, we hope to draw more attention to these $\ell$-groups by extending Bigard's characterization using results from frame theory, cataloging their basic properties, and exhibiting new examples. For reasons explained shortly, we use the symbol $\M$ to denote the class of semi-boolean $\ell$-groups.

One reason for our interest in the semi-boolean $\ell$-groups is that they play a significant role in the model theory of abelian $\ell$-groups. Condition (1) above is among the five axioms that make up Weispfenning's characterization of the existentially closed (e.c.) abelian $\ell$-groups (\cite[Theorem 3.6.3(2)]{W}), so every e.c. abelian $\ell$-group is semi-boolean. Moreover, that axiom is the only one that is not expressible by a first-order sentence in the language of $\ell$-groups, so in that sense it is the most complicated one. The information presented in this paper might eventually help to shed light on some of the long-standing open problems about e.c. abelian $\ell$-groups (see, e.g., \cite[Section 7]{scowcroft}).

Unlike semi-boolean $\ell$-groups, $z$-subgroups have been investigated by several authors, including  in \cite{B72} and \cite{cm} in the context of $\ell$-groups, in \cite{keimel} in the context of ideal lattices, and in \cite{HP80a}, \cite{HP80b}, \cite{dePagter}, and \cite{HP83} in the context of vector lattices (where they are called ``$d$-ideals"). Later, \Mart and Zenk \cite{mz2003}  studied $d$-elements  (analogues of $z$-subgroups) in algebraic frames, and more recently \cite{bkm} examined those algebraic frames with FIP in which every element is a $d$-element (an analogue of condition (2) above), which the present authors call \Mart frames.

Let $G$ be an $\ell$-group. Then $G$ is semi-boolean if and only if the frame $\fC(G)$ of all convex $\ell$-subgroups of $G$ is a \Mart frame. We use this connection to transfer results about \Mart frames to semi-boolean $\ell$-groups. To signal this  frame-theoretic approach we write $G \in \M$ to abbreviate that $\fC(G)$ is a \Mart frame (and hence that $G$ is semi-boolean). When they have enough maximal elements, \Mart frames are Yosida frames (in the sense of \cite{mz06}); if $\fC(G)$ is a Yosida frame, then we write $G \in \Y$. After Section \ref{prelim} gives some background on lattices and $\ell$-groups, Section \ref{MandY} recasts and extends results on \Mart and Yosida frames from \cite{bkm} and \cite{mz06}, which are then used in Sections \ref{sectionM} and \ref{sectionY} to produce new characterizations of $\M$ and $\Y$; it is also shown that $\M$ is a radical class and that an archimedean $G \in \M$ can have $G \notin \Y$. The final sections introduce and apply what we call the $G+B$ construction. Adapted from ring theory, this construction yields a retract of $G$ whose root system of  prime subgroups is closely related to that of $G$ and yet is sufficiently different to allow the extension obtained to have novel features.  Section \ref{G+B} provides a quick account of the basic properties of $G+B$, then Section \ref{MfromY} characterizes when $G+B \in \M$ and applies that result to produce an example of an archimedean $\ell$-group with strong unit that is in $\M$ but whose Yosida space not zero-dimensional.

\section{Preliminaries}\label{prelim}

We assume some familiarity with lattices and lattice-ordered groups; for undefined terms see \cite{gratzer} and \cite{darnel}.

\subsection{Lattices}

Let $L$ be a distributive 0-lattice. If $H \subseteq L$, then we write $\idl(H)$ for the ideal generated by $H$ in $L$ (\cite[p. 32]{gratzer}); if $H = \{ a \}$, then we write $\idl(a)$ instead of $\idl(H)$. We write $\Spec(L)$ for the set of prime ideals of $L$ (\cite[p. 116]{gratzer}), $\Min(L)$ for the set of minimal prime ideals of $L$ (\cite[p.196]{gratzer}), $\Max(L)$ for the set of maximal ideals of $L$ (it is possible that $\Max(L) = \emptyset$), and $\Idl L$ for the full ideal lattice of $L$ (\cite[p. 33]{gratzer}). If $S \subseteq L$, let $S^{\perp} = \{ b \in L : a \wedge b = 0 \mbox{ for all } a\in S \}$; if $S= \{ a \}$, we write $a^{\perp}$ instead of $S^{\perp}$. If $S \in \Idl L$, then $S^{\perp} \in \Idl L$. If $a^{\perp} = \{ 0 \}$, then we say that $a$ is dense in $L$. 

A frame is a complete lattice satisfying the infinite distributive law
\[
x \wedge \bigvee Y = \bigvee  \{ x \wedge y : y \in Y \},
\]
and an algebraic frame is one in which every element is a join of compact elements (\cite[Definition 41]{gratzer}). For example, $\Idl L$ is an algebraic frame (\cite[Theorem 42]{gratzer}). Moreover, the set of compact elements of $\Idl L$ is a sublattice of $\Idl L$ that is isomorphic to $L$ (\cite[Corollary 7]{gratzer}). We say a frame has FIP whenever the meet of any two compact elements is again compact. It follows that $F$ is an algebraic frame with FIP if and only if $F$ is isomorphic to $\Idl L$ for some distributive 0-lattice $L$.

We consider two different topologies on $\Spec(L)$. First, a base for the hull-kernel topology on $\Spec(L)$ is $\{ U(a) : a \in L \}$, where $U(a) = \{ p \in \Spec(L) : a \notin p \}$ (see, e.g., \cite[Ch. II.5]{gratzer}, where a more general version is developed for join-semilattices). Second, a base for the patch topology on $\Spec(L)$ is $\{ U(a) \cap V(b) : a,b \in L \}$, where $V(b) = \{ p \in \Spec(L) : b \in p \}$.

%Suppose $F$ is any algebraic frame with FIP. If $A \subseteq F$ and $b \in F$, then $A \wedge b = 0$ means $a \wedge b = 0$ for every $a \in A$. Let $A^{\perp} = \bigvee \{ b \in F : A \wedge b = 0 \}$; if $A = \{ a \}$, then we write $a^{\perp}$ instead of $A^{\perp}$. We say $a$ is dense if $F$ if $a^{\perp} = 0$. 

If $I \in \Idl L$ and $c^{\pperp} \subseteq I$ whenever $c \in L$, then we call $I$ a $d$-element of $\Idl L$. Suppose $F$ is an algebraic frame $F$ with FIP. Then $F$ is said to be a \Mart frame if every element of $F$ is a $d$-element, and $F$ is called a Yosida frame if every compact element of $F$ is a meet of maximal elements. See \cite{mz2003} and \cite{bkm} for more about $d$-elements and \Mart frames, and see \cite{mz06} for more about Yosida frames. 

\subsection{General $\ell$-groups}

%An $\ell$-group is a group $(G,+,0)$ equipped with a lattice-structure $(G,\leq,\vee,\wedge)$ such that for all $g,h,k_1,k_2\in G$, if $g\leq h$, then $k_1+g+ k_2\leq k_1+h+k_2$. We do not, in general, assume that $G$ is abelian. However, some of our work will be centered around archimedean $\ell$-groups, which are necessarily abelian (\cite[Theorem 53.3]{darnel}). When $(G,\leq)$ satisfies the trichotomy law ($\forall g,h \in G$, $g < h$ or $h < g$ or $g=h$), then we say $G$ is a \emph{totally-ordered group}.

%The \emph{positive cone} of $G$ is the set $G^+=\{g\in G: 0\leq g\}$. We let $g^+=g\vee 0$ and $g^-=(-g)\vee 0$ and call these the \emph{positive} and \emph{negative parts} of $g$, respectively. It is a fact that $g=g^+-g^-$. The element $|g|=g^++g^- =g^+\vee g^-$ is called the \emph{absolute value} of $g$.

Let $G$ be a lattice-ordered group ($\ell$-group, for the sake of abbreviation). If $a \in G$, then the principal convex $\ell$-subgroup of $G$ generated by $a$ is denoted by $G(a)$, and $G(a) = \{ b \in G : \vert b \vert \leq n \vert a \vert \mbox{ for some } n \in \N \}$  (\cite[Proposition 7.13]{darnel}). Let $\mathbf{C}(G)$ be the set of all principal convex $\ell$-subgroups of $G$ ordered by inclusion. Then $\mathbf{C}(G)$ is a sublattice of the lattice $\fC(G)$ of all convex $\ell$-subgroups of $G$  (\cite[Proposition 7.15]{darnel}). In fact, $\mathbf{C}(G)$ is precisely the set of compact elements of $\fC(G)$  (\cite[Proposition 7.16]{darnel}), and $\fC(G)$ is isomorphic to $\Idl \mathbf{C}(G)$, and thus is an algebraic frame with FIP. Moreover, the spaces $\Spec(G)$, $\Min(G)$, and $\Max(G)$ of proper prime subgroups, minimal prime subgroups, and maximal convex $\ell$-subgroups  of $G$ correspond to $\Spec(L)$, $\Min(L)$, and $\Max(L)$, respectively, where $L \coloneqq \mathbf{C}(G)$ (e.g., if $p \in \Spec(G)$, then $\{ G(a) : a \in p \} \in \Spec(L) \}$, and if $q \in \Spec(L)$ then $\bigcup q \in \Spec(G)$).

Recall that $p \in \Spec(G)$ if and only if $G/p$ is totally ordered (\cite[Theorem 9.1]{darnel}). One can show that $\Spec(G) \neq \emptyset$ using Zorn's lemma. In particular, for any $0\neq a \in G$, there are convex $\ell$-subgroups which are maximal with respect to not containing $a$. These are called values of $a$; we denote the set of all values of $a$ by $\Val(a)$. Note that $\Val(a) \subseteq \Spec(G)$ (\cite[Corollary 10.4]{darnel}). In general, not every prime subgroup is a value of some element of $G$. With respect to the hull-kernel topology, $\Val(a)$ is a compact Hausdorff space.

%The collection $\Spec(G)$ of prime subgroups may be equipped with several different topologies. For each $g\in G$, we let $U(g)=\{P\in \Spec(G): g\notin P\}$ and $V(g)=\Spec(G)\ssm U(g)$. The collection $\{U(g):g\in G\}$ forms a base for the well-known \emph{hull-kernel topology} on $\Spec(G)$. The collection $\{V(g):g\in G\}$ also forms a base for a topology which is becoming more well-known; we call this the \emph{inverse topology}. The topology generated by both collections is the well-known \emph{patch topology}, which has the collection $\{U(a)\cap V(b): a,b\in G\}$ as a base.

% The fact that it is Hausdorff is a consequence of what is known as \emph{disjointification} in $\ell$-groups: for any $0\leq a,b\in G^+$, if $a'=a-(a\wedge b)$ and $b'=b-(a\wedge b)$, then
%$$G(a\vee b)=G(a')\vee G(b)=G(a)\vee G(b')$$
%and $a' \wedge b' = 0$.

By an application of Zorn's Lemma, every prime subgroup contains at least one minimal prime subgroup. On the other hand, $\Max(G) = \emptyset$ is possible even when $G$ is abelian. In the abelian case, $m \in \Max(G)$ if and only if $G/m$ is $\ell$-isomorphic to a subgroup of the real numbers; H\"{o}lder's Thereom \cite[Theorem 24.16]{darnel} says that every totally ordered archimedean $\ell$-group is isomorphic to a subgroup of $\mathbb{R}$. Both $\Min(G)$ and $\Max(G)$ can be viewed as subspaces of $\Spec(G)$ with  respect to either the hull-kernel or the patch topology.

Recall that for a subset $S\subseteq G$, the \emph{polar} of $S$ is the set
$$S^\perp=\{h\in G: |h|\wedge |s|=0 \hbox{ for all } s\in S\},$$
which is a convex $\ell$-subgroup of $G$. Under inclusion, the set of all polars forms a complete Boolean algebra with $\perp$ as the complement operation (\cite[Theorem 13.7]{darnel}).
When $S=\{a\}$ we write instead $a^\perp$. This notation fits with that discussed above for lattices: if $a \in G$, then we have $a^{\perp} = \{ b \in G : G(b) \in G(a)^{\perp} \}$, where $G(a)^{\perp}$ is the $\perp$ operation in $\mathbf{C}(G)$. The collection of all principal polars, i.e., polars of the form $a^{\pperp}$, forms a sublattice of the Boolean algebra of all polars (\cite[Theorem 13.11]{darnel}). When $u^\perp=\{0\}$ and $u \geq 0$, we call $u$ a weak (order) unit. Equivalently, $u\in G^+$ is a weak unit if $G=u^{\pperp}$. Note that $u$ is a weak unit if and only if $G(u)$ is dense in $\mathbf{C}(G)$. A strong (order) unit is an element $0 \leq u \in G$, such that $G(u)=G$. Neither type of unit need exist in $G$. A principal polar is a convex $\ell$-subgroup of the form $a^{\pperp}$ for some $a\in G$. A $d$-subgroup of $G$ is a $d$-element of $\fC(G)$, and the collection of all prime subgroups that are $d$-subgroups is denoted $\Spec_d(G)$.

\subsection{The categories $\mathbf{W}$ and $\mathbf{W}^*$}

Recall that the $\ell$-group $G$ is said to be archimedean if $a,b \geq 0$ and $na\leq b$ for all $n\in \N$ imply that $a = 0$; in that case we write $G \in \Arch$. The objects of the category {\bf W} are the pairs $(G,u)$ with $G \in \Arch$ and $0 \leq u \in G$ a distinguished weak unit, and the morphisms are arrows $f:(G,u)\ra (H,v)$ such that $f:G\ra H$ is an $\ell$-homomorphism, i.e., a group homomorphism and a lattice homomorphism, with $f(u)=v$. $\bf{W}^*$ denotes the full subcategory of $\bf{W}$ consisting of those objects whose distinguished unit is strong. In both {\bf W} and $\bf{W}^*$, we let $YG=Y(G,u) = \Val(u)$ and call this the {\it Yosida space} of $(G,u)$.

If $X$ is a Tychonoff space, then $C(X)$ denotes the collection of real-valued continuous functions on $X$. With pointwise operations and order, $C(X)$ is an archimedean $\ell$-group and $(C(X), \mathbf{1}) \in \bf{W}$, where $\mathbf{1}$ is the constant function with with value 1. We let $\overline{\R}=\R\cup \{\pm \infty\}$ be the two-point compactification of the real numbers $\mathbb{R}$. A continuous function $f \colon X \to \overline{\R}$ is said to be {\it almost real-valued} if $f^{-1}(\R)$ is dense in $X$. We let
$$D(X)=\{f:X\ra \bar{\R}: f \hbox{ is almost real-valued}\}.$$
In general, $D(X)$ is always a lattice but need not be a group. And clearly there are subsets of $D(X)$ which are $\ell$-groups, e.g., $C(X)$; we call such a subset an $\ell$-group in $D(X)$.

We now remind the reader of a useful representation for $\bf{W}$-objects (see \cite{HR77} for more on this representation).

\begin{theorem}[The Yosida Embedding Theorem]\label{YRT}
Let $(G,u) \in {\bf W}$. There is a {\bf W}-morphism $\varphi: (G,u) \ra (D(YG), {\bf 1})$ such that for any $p\in YG$ and closed $V\subseteq YG$ not containing $p$, there is some $a\in G$ such that $\varphi(a)(p)=0$ and $\varphi(a)(q)=1$ for all $q\in V$. Moreover, $YG$ is the unique compact Hausdorff space (up to homeomorphism) with this property.
\end{theorem}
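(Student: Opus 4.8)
The plan is to realize $\varphi$ as the classical ``standard part'' (Dedekind cut) map. Fix $(G,u)\in{\bf W}$ and $P\in YG=\Val(u)$. Since a value is prime, $G/P$ is totally ordered (\cite[Theorem 9.1]{darnel}) and carries the positive element $u+P$. For $g\in G$ I would set
$$\hat g(P)=\sup\Big\{\tfrac{m}{n}:m\in\Z,\ n\in\N,\ m(u+P)\le n(g+P)\text{ in }G/P\Big\}\in\overline{\R},$$
the supremum of the lower cut that $g+P$ determines against $u+P$; total ordering of $G/P$ makes this a well-defined element of $\overline{\R}$, possibly $\pm\infty$. Then $\varphi(g):=\hat g$.

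The first routine block is to check that $\varphi$ is a {\bf W}-morphism and that each $\hat g$ is continuous. Because $G/P$ is totally ordered the standard part is additive and commutes with $\vee,\wedge$ wherever the relevant values are finite, and $\hat u\equiv\mathbf{1}$; since the operations of $D(YG)$ are the pointwise ones on the dense set of finiteness, once I know $\hat g\in D(YG)$ it follows that $\varphi$ preserves $+,\vee,\wedge$ and sends $u$ to $\mathbf{1}$. For continuity the key identity is
$$\{P\in YG:\hat g(P)>\tfrac{m}{n}\}=\{P\in YG:(ng-mu)^+\notin P\}=U\big((ng-mu)^+\big)\cap YG,$$
which holds because in the totally ordered $G/P$ one has $(ng-mu)+P>0$ iff $(ng-mu)^+\notin P$ (primeness forces one of $(ng-mu)^{\pm}$ into $P$). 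Uniting and intersecting these basic hull-kernel opens over the rationals shows that $\hat g^{-1}$ of every ray $(r,\infty]$ and $[-\infty,r)$ is open, so $\hat g$ is continuous.

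The main obstacle is showing each $\hat g$ is almost real-valued, i.e. that $\hat g\in D(YG)$; this is exactly where the archimedean hypothesis is indispensable. Treating $g\ge0$ it suffices to bound $\{\hat g=+\infty\}=\bigcap_{n}V\big((nu-g)^+\big)\cap YG$, a decreasing intersection of hull-kernel closed sets, hence closed. To see it is nowhere dense, suppose it contained a nonempty basic open $U(h)\cap YG$; replacing $h$ by $h\wedge u$ (nonzero since $u^{\perp}=\{0\}$) I may assume $0<h\le u$. Then for every $n$, each value of $u$ outside $V(h)$ lies in $V\big((nu-g)^+\big)$, forcing $h\wedge(nu-g)^+$ into every value of $u$, hence into $\bigcap\Val(u)=\{0\}$ (the injectivity of the Yosida map for archimedean $G$ with weak unit; see \cite{HR77}). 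Thus $h\perp(nu-g)^+$, and since polars are subgroups, $kh\perp(nu-g)^+$ for all $k$. Decomposing $nu=(nu\wedge g)+(nu-g)^+$ and using the standard inequality $x\wedge(a+b)\le(x\wedge a)+(x\wedge b)$ gives, for $n\ge k$, $kh=kh\wedge nu\le kh\wedge g\le g$; hence $kh\le g$ for all $k\in\N$, so $h=0$ by archimedeanness — a contradiction. Therefore $\{\hat g=\pm\infty\}$ is nowhere dense and $\hat g\in D(YG)$.

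For the separation statement I would first record the lemma that for $P\in YG$ and $0\le c$, $c\notin P$ iff $\hat c(P)>0$: maximality of the value $P$ puts $u$ into the convex $\ell$-subgroup generated by $P$ and $c$, so $u\le d+nc$ with $d\in P^+$, whence $(u-nc)^+\le d\in P$ and $\hat c(P)\ge 1/n>0$. Now given $p\in YG$ and closed $V\not\ni p$, choose a basic neighborhood $U(c)\cap YG$ of $p$ missing $V$ and set $a=c\wedge u$, so $0<a\le u$, $a\notin p$ (as $p$ is prime and $u\notin p$), and $a\in q$ for all $q\in V$. By the lemma $\hat a(p)=:t>0$ while $\hat a\equiv0$ on $V$; taking any integer $m>1/t$, the element $g=(u-ma)^+$ satisfies $\hat g=(\mathbf{1}-m\hat a)^+$, so $\hat g(p)=(1-mt)^+=0$ and $\hat g(q)=1$ for all $q\in V$. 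Finally, for uniqueness: $YG$ is compact Hausdorff by the quoted fact about $\Val(u)$, and the lemma shows the sets $\{\hat a>0\}=U(a)\cap YG$ form a base, so the hull-kernel topology is precisely the initial topology induced by $\{\hat g\}$. If $(Z,\psi)$ is any compact Hausdorff space carrying a {\bf W}-morphism with the same separation property, then $\psi(G)$ separates points and closed sets of $Z$, and $z\mapsto$ (the value of $u$ at which evaluation by $\psi$ reproduces the standard part) is a continuous bijection $Z\to YG$ intertwining $\psi$ with $\varphi$; a continuous bijection of compact Hausdorff spaces is a homeomorphism, so $Z\cong YG$.
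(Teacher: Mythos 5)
First, a point of order: the paper does not prove this theorem at all — it is recalled as a classical result with a pointer to \cite{HR77} — so your proposal can only be measured against the standard argument, which is indeed the one you follow (the Dedekind-cut/standard-part map on $G/P$ for $P\in \Val(u)$, continuity via hull-kernel basic sets, archimedeanness for almost-real-valuedness, a compactness argument for uniqueness). Most of your blocks check out, and your disjointness computation $kh = kh\wedge nu \le (kh\wedge(nu\wedge g)) + (kh\wedge(nu-g)^+) \le g$ is correct. But there is one genuine gap: in the almost-real-valued step you invoke $\bigcap \Val(u)=\{0\}$, citing \cite{HR77} as ``the injectivity of the Yosida map.'' That is the crux of the entire theorem — it is exactly where archimedeanness enters irreducibly (the fact is false for the lex product $\overleftarrow{\Z\times\Z}$ with $u=(1,0)$, where the intersection is $\{0\}\times\Z$), and it is not a soft Zorn exercise: the natural attempt (take a value $Q$ of $v:=h\wedge(nu-g)^+$, note $u\notin Q$ by convexity since $v\le u$, extend $Q$ to some $P\in\Val(u)$) founders because the extension $P$ can swallow $v$, and your own reduction shows no pointwise argument in a single $G/P$ can work since $G/P$ need not be archimedean. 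Citing, for precisely this step, the very source the paper cites for the whole theorem makes your proposal circular at its only hard point; a self-contained proof must supply an argument here (e.g., via the Dedekind completion, where projectability lets one split off the band of $(ng-u)^+$ and manufacture a value of $u$ missing $g$, or HR77's own route).

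Two lesser issues. (i) Your continuity identity $\{\hat g>\tfrac{m}{n}\}=U((ng-mu)^+)\cap YG$ is true, but the justification you give (primeness forces one of $(ng-mu)^{\pm}$ into $P$) only establishes $U((ng-mu)^+)\cap YG=\{P:(ng-mu)+P>0\}$; the inclusion of the latter set in $\{\hat g>\tfrac{m}{n}\}$ additionally needs the maximality of $P$ in $\Val(u)$ — namely that $u+P$ lies in every nonzero convex $\ell$-subgroup of $G/P$, so no strictly positive element of $G/P$ is infinitesimal relative to $u+P$; otherwise $(ng-mu)+P$ could be positive while $\hat g(P)=\tfrac{m}{n}$ exactly. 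You prove exactly this lemma later, in the separation step ($u\le d+nc$ with $d\in P^+$, hence $(u-nc)^+\in P$); hoist it to the front, or note that the sandwich $\{\hat g>\tfrac{m}{n}\}\subseteq U((ng-mu)^+)\cap YG\subseteq \{\hat g\ge \tfrac{m}{n}\}$ already suffices for continuity after taking unions over rationals. (ii) The uniqueness paragraph is a sketch rather than a proof: for $z\in Z$ you must verify that $\{g\in G:\psi(g)(z)=0\}$ is genuinely \emph{maximal} with respect to excluding $u$ (this uses the separation property of $\psi$, not just primeness-style facts), that the resulting map $Z\to YG$ is continuous and injective, and that it is onto (compactness of the image together with the separation property of $\varphi$ on $YG$); the parenthetical definition ``the value of $u$ at which evaluation by $\psi$ reproduces the standard part'' does not carry this weight as written.
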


It follows from the Yosida Embedding Theorem that an archimedean $\ell$-group with a strong order unit can be viewed as an $\ell$-subgroup of $C(YG)$ with $u$ as the constant function {\bf 1}.

When dealing with the Yosida embedding of {\bf W}-objects, we drop the use of $\varphi$ and identify $G$ with its image in $D(YG)$. Accordingly, for $a\in G$, we let
$$\coz(a)=\{p\in YG: a(p)\neq 0\} \hspace{.2in} \hbox{ and } \hspace{.2in} Z(a)=\{p\in YG: a(p)=0\}$$
and call these the {\it cozero-set} and {\it zero-set} of $a$, respectively. Note that the cozero-sets of $G$ form a base for the hull-kernel topology on $YG$ (as $\coz(a) = U(a) \cap YG$). Note too the following useful characterization of principal polars in $\bf{W}$-objects (where $\Cl \coz(a)$ denotes the closure of $\coz(a)$ in $YG$):

\begin{lemma}\label{polarYG}
If $(G,u)\in {\bf W}$ and $a \in G$, then
$a^{\pperp}=\{ b\in G: \coz(b)\subseteq \Cl \coz(a)\}.$
\end{lemma}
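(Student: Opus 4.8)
The plan is to carry out the entire argument inside the Yosida representation, regarding $G$ as an $\ell$-subgroup of $D(YG)$ with $u = {\bf 1}$ and all lattice operations computed pointwise in $\overline{\R}$. Everything rests on first translating the polar relation into a statement about cozero-sets. For $h,g\in G$, the element $|h|\wedge|g|$ is the pointwise function $p\mapsto\min\bigl(|h(p)|,|g(p)|\bigr)$, and since for $s,t\in[0,\infty]$ one has $\min(s,t)=0$ exactly when $s=0$ or $t=0$, I would first observe that $|h|\wedge|g|=0$ if and only if $h(p)=0$ or $g(p)=0$ at every $p\in YG$, i.e.\ if and only if $\coz(h)\cap\coz(g)=\emptyset$. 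Hence $h\in g^{\perp}\iff\coz(h)\cap\coz(g)=\emptyset$.

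Next I would upgrade this to a description of $g^{\perp}$ in terms of the \emph{closure} of $\coz(g)$, using that cozero-sets are open. If $\coz(h)\cap\coz(g)=\emptyset$ then $\coz(h)\subseteq Z(g)$, and as $\coz(h)$ is open and $Z(g)$ is closed we in fact get $\coz(h)\subseteq\inte Z(g)=YG\ssm\Cl\coz(g)$; the reverse implication is immediate since $\coz(g)\subseteq\Cl\coz(g)$. This yields
\[
g^{\perp}=\{h\in G:\coz(h)\cap\Cl\coz(g)=\emptyset\}.
\]
With these two descriptions in hand the lemma reduces to a pair of inclusions. For $\supseteq$, I would suppose $\coz(k)\subseteq\Cl\coz(g)$ and take any $h\in g^{\perp}$; then $\coz(k)\cap\coz(h)\subseteq\Cl\coz(g)\cap\coz(h)=\emptyset$, so $|k|\wedge|h|=0$, and since $h\in g^{\perp}$ was arbitrary this gives $k\in(g^{\perp})^{\perp}=g^{\pperp}$.

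The remaining inclusion $\subseteq$ is the substantive one, and I would prove it by contradiction. Suppose $k\in g^{\pperp}$ but $\coz(k)\not\subseteq\Cl\coz(g)$, so there is a point $p\in\coz(k)$ with $p\notin\Cl\coz(g)$. Applying the separation property of the Yosida Embedding Theorem to $p$ and the closed set $V=\Cl\coz(g)$ (which omits $p$), I obtain $f\in G$ with $f(p)=0$ and $f\equiv 1$ on $V$. The element $h:=u-f$ lies in $G$, satisfies $h(p)=1\neq 0$ so that $p\in\coz(h)$, and vanishes on $V=\Cl\coz(g)$ so that $\coz(h)\cap\Cl\coz(g)=\emptyset$; by the description above, $h\in g^{\perp}$. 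But $k\in g^{\pperp}$ forces $\coz(k)\cap\coz(h)=\emptyset$, contradicting $p\in\coz(k)\cap\coz(h)$. Hence $\coz(k)\subseteq\Cl\coz(g)$.

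I expect the main obstacle to be exactly this last inclusion, and specifically the construction of a witnessing element $h\in g^{\perp}$ that meets $\coz(k)$. The Yosida separation property naturally produces a function that \emph{vanishes} at the chosen point and equals $1$ on a prescribed closed set, which is the wrong orientation; the key maneuver is to pass to $u-f$, reversing this so that the function vanishes on $\Cl\coz(g)$ (placing it in $g^{\perp}$) while remaining nonzero at $p$. Once that element is in hand, the contradiction is immediate, and everything else is routine bookkeeping with the pointwise meaning of the lattice operations and the openness of cozero-sets.
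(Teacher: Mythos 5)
Your proof is correct and complete. The paper in fact states Lemma \ref{polarYG} without proof (it is offered as a known fact about the Yosida representation), so there is no argument of the authors' to compare against; what you have written is the standard justification, and it holds up at every step: the pointwise identification $h\in g^{\perp}\iff\coz(h)\cap\coz(g)=\emptyset$, the upgrade to $\coz(h)\cap\Cl\coz(g)=\emptyset$ via openness of $\coz(h)$ and $\inte Z(g)=YG\ssm\Cl\coz(g)$, the easy inclusion into $g^{\pperp}$, and the separation argument with the $u-f$ flip, which is indeed the one genuinely non-routine move, since the Yosida separation property produces a function vanishing at $p$ rather than on $\Cl\coz(g)$. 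The only thing you use silently is that group operations in $G$ may be evaluated pointwise in $D(YG)$ at points where the functions involved are real-valued --- e.g., that $(u-f)(p)=1-f(p)$ when $f(p)\in\R$, and $(u-f)(q)=0$ when $f(q)=1$ --- which is justified because the identity holds on the dense open set $f^{-1}(\R)$ and both sides are continuous into the Hausdorff space $\overline{\R}$, so it persists at every point of finiteness of $f$; this is standard bookkeeping for $\ell$-groups in $D(X)$ but deserves one sentence. Note also that $u-f$ need not be positive, but that is harmless since polars are defined via absolute values, and your argument covers the degenerate cases $\coz(g)=\emptyset$ and $\Cl\coz(g)=YG$ without modification.
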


\begin{notation}\label{zsetnotation}
Suppose $\varphi \colon G \to \prod_{i \in I} G_i$ is an embedding of $G$ into a direct product of totally ordered groups. Since this representation of $G$ (identifying the elements of $G$ with their images in the product) need not be the Yosida representation, to avoid potential confusion we write 
\[
\sC_{\varphi}(a)=\{i\in I: \varphi(a)(i)\neq 0\} \mbox{ and } \sZ_{\varphi}(a) = I \ssm \sC_{\varphi}(a)
\]
instead of $\coz(a)$ and $Z(a)$, respectively. In this situation, for $a \in G$, let $I(\sZ_{\varphi}(a))$ be the set of all $b \in G$ such that $\sZ_{\varphi}(b) \supseteq \sZ_{\varphi}(a)$. Note that $I(\sZ_{\varphi}(a))$ is a convex $\ell$-subgroup of $G$ and that $G(a) \subseteq I(\sZ_{\varphi}(a)) \subseteq a^{\pperp}$ always. We sometimes omit the subscript $\varphi$ when the embedding is clear from context.
\end{notation}

%%%%%%%%%%%%%%%%%%%%%%%%%%%%%%%%%%%%%%%
%%%%%%%%%%%%%%%%%%%%%%%%%%%%%%%%%%%%%%%
\section{\Mart and Yosida frames}\label{MandY}

In this section we rephrase and extend some results about frames from earlier papers, putting them in a form that is convenient for application to $\ell$-groups.

Let $L$ be a distributive 0-lattice. Call $L$ disjunctive if for all $a,b \in L$, if $a \nleq b$, there is $c \in L$ such that $a \wedge c > 0$ and $b \wedge c = 0$. This property goes back to Wallman \cite{wallman}.  %and is well known in lattice theory (\cite[\S 5 of Ch. IX]{B67})).

Parts (2)-(8) of the following theorem are just \cite[Theorems 2.1, 2.5]{bkm} restated in terms of the frame of ideals of a distributive 0-lattice.

\begin{theorem}\label{martframes}
Suppose $L$ is a distributive $0$-lattice. Then following statements are equivalent
\begin{itemize}
\item[(1)] $L$ is disjunctive.
\item[(2)] $\Idl L$ is a Martinez frame.
\item[(3)] $\Spec_d(L) = \Spec(L)$.
\item[(4)] For all $c \in L$, $\idl(c) =\idl(c)^{\pperp}$.
\item[(5)] For all $c \in L$, $\idl(c)$ is a $d$-element of $\Idl L$.
\item[(6)] The set $\Min(L)$ is patch-dense in $\Spec(L)$.
%\item[(7)] The set $\{ U(a) :  0 < a \in F \}$ is a $\pi$-base for the patch topology on $\Spec(L)$.
\item[(7)] The set $\{ U(c) : 0 < c \in L \}$ is a $\pi$-base for the patch topology on $\Spec(L)$.
\item[(8)] Distinct compact open subsets of $\Spec(L)$ have distinct closures (with respect to the hull-kernel topology).
\item[(9)] For all $c \in L$, $\idl(c)$ is disjunctive.
\item[(10)] For all $I \in \Idl L$, $I$ is disjunctive.
\item[(11)] For all $c \in L$, $\idl(c) = \bigcap \{ p \in \Min(L) : c \in p \}$.  
\item[(12)] For $a,b \in L$, $\idl(a) = \idl(b)$ if and only if $\idl(a)^{\pperp} = \idl(b)^{\pperp}$.
\end{itemize}
\end{theorem}

\begin{proof}
(1) $\Rightarrow$ (2). Suppose $L$ is disjunctive. Since $\Idl L$ is an algebraic frame with FIP, to show that $\Idl L$ is Mart\'{i}nez, it suffices to show that every element in $\Idl L$ is a $d$-element. Take $I \in \Idl L$ and suppose $b \in I$. To show that $I$ is a $d$-element, it is enough to show $b^{\pperp} \subseteq \idl(b)$. Suppose $a \notin \idl(b)$. Since $L$ is disjunctive, there is $c \in L$ such that $a \wedge c > 0$ and $b \wedge c = 0$. It follows that $a \notin b^{\pperp}$. Hence $\Idl L$ is \Marti.

(2) $\Rightarrow$ (1).  Suppose $\Idl L$ is \Marti. Then by \cite[Theorem 2.1]{bkm} we have $\idl(c) = \idl(c)^{\pperp}$ for all $c \in L$. Suppose $a \nleq b$ in $L$. Then $a \notin \id(b) = \idl(b)^{\pperp}$, so there is $c \in L$ such that $a \wedge c > 0$ and $b \wedge c = 0$. Thus $L$ is disjunctive.

The equivalence of (1)-(8) now follows from \cite[Theorems 2.1,2.5]{bkm}.

(1) $\Rightarrow$ (9). Suppose $a,b \in \idl(c)$ with $a \nleq b$. If $L$ is disjunctive, there is $d \in L$ such that $a \wedge d > 0$ and $b \wedge d = 0$. Let $e = d \wedge c$. Then $e \in \idl(c)$, $a \wedge e = a \wedge d > 0$, and $b \wedge e  = b\wedge d = 0$. Thus $\idl(c)$ is disjunctive.

(9) $\Rightarrow$ (1). Suppose $a,b \in L$ with $a \nleq b$. If $\idl(a \vee b)$ is disjunctive, then there is $c \leq a \vee b$ such that $a \wedge c > 0$ and $b \wedge c = 0$. Thus $L$ is disjunctive.

(10) $\Rightarrow$ (9). Immediate.

(9) $\Rightarrow$ (10). Suppose $a,b \in I \in \Idl L$ with $a \nleq b$. If $\idl(a \vee b)$ is disjunctive, then there is $c \leq a \vee b \in I$ such that $a \wedge c > 0$ and $b \wedge c = 0$. Thus $I$ is disjunctive.

(11) $\Rightarrow$ (5). Every minimal prime element of $\Idl L$ is a $d$-element (this follows from \cite[Corollary 1.6]{bkm}), and any meet of $d$-elements is a $d$-element.

(4) $\Rightarrow$ (11). Every polar of $L$ is an intersection of minimal prime ideals of $L$ (this follows from \cite[Theorem 1.5]{kist}).

(4) $\Rightarrow$ (12). Immediate.

(12) $\Rightarrow$ (4). If $a \in \idl(c)^{\pperp} - \idl(c)$, then $\idl(a \vee c)^{\pperp} = \idl(c)^{\pperp}$ but $\idl(a \vee c) \neq \idl(c)$. So if (4) fails, then (12) fails.
\end{proof}

The following result, which is due to the anonymous reviewer, will be used in the next section to show that $\M$ is a radical class.

\begin{proposition}\label{reflemma}
Let $L$ be a distributive lattice with top 1 and bottom 0. If $u_0,u_1 \in L$, $u_0 \vee u_1 = 1$, and $\id(u_0)$ and $\id(u_1)$ are both disjunctive, then $L$ is disjunctive.
\end{proposition}

\begin{proof}
Suppose $a,b \in L$ with $a \nleq b$. Since $u_0 \vee u_1 = 1$, distributivity allows any $x \in L$ to be written in the form $x = (x \wedge u_0) \vee (x \wedge u_1)$. Thus, we may write $a = a_0 \vee a_1$ and $b = b_0 \vee b_1$ with $a_i \coloneqq a \wedge u_i$ and $b_i \coloneqq b \wedge u_i$ for $i \in \{ 0 ,1 \}$. Reindexing if necessary, we may assume $a_0 \nleq b_0$ (since $a_0 \leq b_0$ and $a_1 \leq b_1$ would contradict $a \nleq b$). By the assumption that $\id(u_0)$ is disjunctive, there is $c \leq u_0$ such that $0 < a_0 \wedge c$ and $0 = b_0 \wedge c$. Now, $b\wedge c = b\wedge (u_0 \wedge c) = b_0 \wedge c = 0$, and $a \wedge c = a \wedge (u_0 \wedge c) = a_0 \wedge c > 0$, showing that $L$ is disjunctive.
\end{proof}

Next we consider two conditions that are approximately dual to the disjunction property. A join-semilattice $L$ is said to be \emph{conjunctive} if it has $1$ and for all $a,b \in L$ such that $b \nleq a$, there is an element $c \in L$ such that $a \vee c < 1$ and $b \vee c = 1$. In the context of distributive 0-1-lattices, this is the order-dual of the disjunctive property; indeed, it has been called ``dual disjunctive" by some authors, e.g., \cite[Section 4]{cornish}. A join-semilattice $L$ is said to be \emph{ideally conjunctive} if for all $a,b \in L$ such that $b \nleq a$, there is $W \in \Idl L$ such that $a \in W \neq L = W \vee \id(b)$.
For background on these concepts see \cite{DIM}. %In \cite{mz06}, the authors show that Yosida frames are precisely those algebraic frames with FIP that are ``finitely subfit"; 
In \cite[Section 3.5]{DIM}, the authors observe that a Yosida frame is isomorphic to the frame of ideals of an ideally conjunctive distributive join-semilattice.

%Or, a distributive lattice with 1 is called \emph{dual disjunctive} if its dual is disjunctive.

\begin{proposition}\label{idealconj}
Suppose $L$ is a distributive 0-lattice. The following are equivalent.
\begin{itemize}
\item[(1)] $L$ is ideally conjunctive.
\item[(2)] For all $c \in L$, $\id(c) = \bigcap \{ m \in \Max(L) : c \in m \}$.
\item[(3)] $\Idl L$ is a Yosida frame. 
\end{itemize} 
\end{proposition}

\begin{proof}
(1) $\Leftrightarrow$ (2). This is a special case of \cite[Proposition 4.2]{mz06}, which is itself a special case of \cite[Corollary 3.20]{DIM}.

(2) $\Leftrightarrow$ (3). As discussed in Section \ref{prelim}, $\Idl L$ is an algebraic frame with FIP whose compact elements are precisely the principal ideals of $L$. Since the maximal ideals of $L$ are precisely the maximal elements of $\Idl L$, the desired equivalence is obtained.
\end{proof}

We conclude this section by establishing some relationships between disjunctive, conjunctive, and ideally conjunctive distributive 0-lattices.

\begin{lemma}\label{perplem}
Suppose $L$ is a distributive 0-lattice with $\bigcap \Max(L) = \{ 0 \}$. Then 
\[
\bigcap \{ m \in \Max(L) : a \in m \} \subseteq \id(a)^{\pperp}
\]
 for every $a \in L$.
\end{lemma}

\begin{proof}
Suppose $c \notin \id(a)^{\pperp}$. Then there is $b \in L$ such that $a \wedge b = 0$ and $c \wedge b > 0$. By hypothesis there is $m \in \Max(L)$ such that $c \wedge b \notin m$, and so $c \notin m$. However, $a \in m$ since $a \wedge (c \wedge b) = 0$.  
%Take $a \in L$ and suppose $c \in M$ whenever $a \in M \in \Max(L)$. If $a \wedge b = 0$, then by hypothesis 
\end{proof}

\begin{proposition}\label{notop}
Suppose $L$ is a distributive $0$-lattice and $\bigcap \Max(L) = \{ 0 \}$. If $L$ is disjunctive, then $L$ is ideally conjunctive.
\end{proposition}

\begin{proof}
Suppose $L$ is disjunctive and take $a \in L$. By \ref{perplem} and \ref{martframes}, we have \[
\bigcap \{ m \in \Max(L) : a \in m \} \subseteq \id(a)^{\pperp} = \id(a).
\]
 So $L$ is ideally conjunctive by \ref{idealconj}.
\end{proof}

\begin{proposition}\label{disj-conj}
Suppose $L$ is a bounded distributive lattice with $\bigcap \Max(L) = \{ 0 \}$. Then the following are equivalent. 
\begin{itemize}
\item[(1)] $L$ is disjunctive.
\item[(2)] $L$ is conjunctive and $\Max(L) \subseteq \Spec_d(L)$. 
\item[(3)] $L$ is conjunctive with no proper dense elements.
%\item[(4)] 
\end{itemize} 
\end{proposition}

\begin{proof}
(1) $\Rightarrow$ (2). If (1) holds, then $L$ is ideally conjunctive by \ref{notop}. But this time $L$ has a top element, so $L$ is conjunctive by \cite[Lemma 2.15]{DIM}. And, since $\Spec(L) = \Spec_d(L)$ by \ref{martframes} and $\Max(L) \subseteq \Spec(L)$ always ($L$ is distributive), one sees that $\Max(L) \subseteq \Spec_d(L)$.
 
 (2) $\Rightarrow$ (3). Suppose (2) holds. Then, since $L$ has 1, we know $L$ is ideally conjunctive by \cite[Lemma 2.15]{DIM}, so $\bigcap \{ m \in \Max{L} : a \in m \} = \idl(a)$ for all $a \in L$ by \ref{idealconj}. Since $\Max(L) \subseteq \Spec_d(L)$ by hypothesis, and since any meet of $d$-elements is a $d$-element, it follows from \ref{martframes} that $\idl(a) = \idl(a)^{\pperp}$. So if $\idl(a)$ is dense, then $\id(a) = \idl(a)^{\pperp} = L$.
 
 (3) $\Rightarrow$ (1). Suppose (3) holds and take $a \in L$. It suffices to show $\id(a)^{\pperp} \subseteq \id(a)$. Take $b \notin \id(a)$. Then, since $L$ is conjunctive, there is $c \in L$ such that $a \vee c < 1$ and $b \vee c = 1$. Since there are no proper dense elements in $L$, there is $0 < d \in L$ such that $d \wedge (a \vee c) = 0$. Now 
\[
d \wedge a \leq (d \wedge a) \vee (d \wedge c) = d \wedge (a \vee c) = 0,
\]
so $d \wedge a = 0$, and similarly $d \wedge c  = 0$, so 
\[
0 < d = d \wedge 1 = d \wedge (b \vee c) = (d \wedge b) \vee (d \wedge c) = (d \wedge b) \vee 0 = d \wedge b.
\]
Thus $b \notin a^{\pperp} = \id(a)^{\pperp}$.
\end{proof}

\begin{remark}
The lattices in \ref{disj-conj} are both disjunctive and conjunctive. Cornish \cite[Theorem 4.2]{cornish} showed that a bounded distributive lattice $L$ is disjunctive and conjunctive if and only if the Dedekind-MacNeille completion of $L$ is a complete Boolean algebra. 
\end{remark}

%%%%%%%%%%%%%%%%%%%%%%%%%%%%%%%%%%%%%%%%%%%%%

\section{Characterizations and preservation of $\M$}\label{sectionM}

%We begin by recalling a fundamental definition.

%\begin{definition}
%Let $H\in \fC(G)$. Then $H$ is called a {\it $d$-subgroup} if $h^{\pperp} \subseteq H$ for each $h\in H$.
%\end{definition}

%The collection of $d$-subgroups shall be denoted by $\fC_d(G)$, and $\Spec_d(G)$ denotes $\Spec(G) \cap \fC_d(G)$. Every polar is in $\fC_d(G)$, and $\Min(G) \subseteq \fC_d(G)$ too (\cite[Proposition 15.1]{darnel}). There are $\ell$-groups $G$ for which $\Spec_d(G) = \Min(G)$; these are known as \emph{complemented $\ell$-groups} (see \cite{cm}). In general, in the other direction, there are convex $\ell$-subgroups which are maximal with respect to being a $d$-subgroup; the collection of these shall be denoted by $\Max_d(G)$. It is known that $\Max_d(G)\subseteq \Spec_d(G)$ (see, e.g., \cite[Proposition 4.3(d)]{bm}).
%Note, however, that $\Max_d(G)$ is not necessarily the same as $\Max(G) \cap \fC_d(G)$.

%Since an arbitrary intersection of $d$-subgroups is again a $d$-subgroup, it follows that there is a map $d:\fC(G)\ra\fC_d(G)$ defined by letting $d(H)$ be the intersection of all $d$-subgroups of $G$ containing $H$. It is easy to show that $d(H)$ can also be constructed as the join $\bigvee_{h \in H} h^{\pperp}$; this join is directed as $h^{\pperp}, k^{\pperp} \subseteq (|h|\vee |k|)^{\pperp}$. 

%\begin{definition}
%We call $G$ a {\it \Mart $\ell$-group} if $\fC(G)=\fC_d(G)$; in that case we write $G \in \M$. Equivalently, $G$ is a \Mart $\ell$-group if $\fC(G)$ is a \Mart frame, i.e., an algebraic fame in which every element is a $d$-element.
%\end{definition}

Recall that $\M$ denotes the class of semi-boolean $\ell$-groups. Accordingly, $G \in \M$ means that $\fC(G)$ is a \Mart frame. We begin with our extension of Bigard's characterization of $\M$.

%Recall that a lattice $L$ with bottom element 0 has the \emph{Wallman disjunction property} if $a < b$ in $L$ implies that there is $c \in L$ such that $a \wedge c = 0$ and $b \wedge c > 0$ (see, e.g., \cite[\S 5 of Ch. IX]{B67}).

\begin{theorem}\label{Main}
Suppose $G$ is an $\ell$-group. Then the following statements are equivalent:
\begin{enumerate}[label={\rm \arabic*.}, nolistsep]
\item[(1)]  $\mathbf{C}(G)$ is disjunctive.
\item[(2)] $G \in \M$, that is, $\fC(G)$ is a \Mart frame. 
\item[(3)] $\Spec_d(G) =\Spec(G)$. 
\item[(4)] For all $a \in G$, $G(a)=a^{\pperp}$. 
\item[(5)]   For all $a \in G$, $G(a)$ is a $d$-element of  $\fC(G)$. 
\item[(6)] The set $\Min(G)$ is patch-dense in $\Spec(G)$. 
%\item[(7)] The set $\{U(H): \{ 0 \} \neq H \in \fC(G) \}$ is a $\pi$-base for the patch topology on $\Spec(G)$. 
\item[(7)] The set $\{U(a):  0 < a  \in G \}$ is a $\pi$-base for the patch topology on $\Spec(G)$
\item[(8)] Distinct compact open subsets of $\Spec(G)$ have distinct closures (with respect to the hull-kernel topology).
\item[(9)] For every $a \in G$, $G(a) \in \M$.
\item[(10)] For every $H \in \fC(G)$, $H \in \M$.
\item[(11)] For all $a \in G$, $G(a) = \bigcap \{ p \in \Min(G) : a \in p \}$.
\item[(12)] For all $a,b\in G$, $G(a) = G(b)$ if and only if $a^{\pperp}=b^{\pperp}$.
\end{enumerate}
\end{theorem}

\begin{proof}
 Apply \ref{martframes} with $L \coloneqq \mathbf{C}(G)$ (recall that then $\Idl L$ and $\fC(G)$ are isomorphic as frames). 
\end{proof}

%\begin{remark}
%Bigard \cite[Proposition 2]{bigard} asserts the equivalence of (1)--(3) of the Introduction, which are  essentially the same as (4), (2), and (12) of \ref{Main}, and proofs are given in \cite[Proposition 3.3.10]{BKW77} and \cite[Proposition 15.5]{darnel}. Perhaps surprisingly, we found nothing else about semi-boolean $\ell$-groups in either \cite{BKW77} or \cite{darnel} and neither mentions them by name.
%\end{remark}

Here are two immediate consequences of \ref{Main}.

\begin{proposition}\label{wu->su}
\begin{itemize}
\item[(1)] If $G$ is a totally-ordered group, then $G \in \M$ if and only if $G \in \Arch$.
\item[(2)] If $G \in \M$, then every weak unit is a strong unit in $G$.
\end{itemize}
\end{proposition}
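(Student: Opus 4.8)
The plan is to prove both parts by reducing to the characterization $G \in \M \iff G(g) = g^{\perp\perp}$ for all $g$ (condition (4) of Theorem \ref{Main}).

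For part (1), suppose $G$ is totally ordered. First I would observe that in a totally ordered group the polar structure is trivial: for any $0 \neq g$, and any $h$, we have $|h| \wedge |g| = \min(|h|,|g|) \neq 0$ whenever both $g,h \neq 0$, so $g^\perp = \{0\}$ and hence $g^{\perp\perp} = G$ for every nonzero $g$. Thus condition (4) becomes the demand that $G(g) = G$ for every $0 \neq g$, i.e.\ every nonzero element is a strong order unit. I would then argue that this condition is exactly the archimedean property for a totally ordered group. In the forward direction, if $G \in \M$ and $0 \leq a, b$ with $na \leq b$ for all $n$, then (assuming $a \neq 0$) $G(a) = G$ forces $b \in G(a)$, so $b \leq ma$ for some $m$, contradicting $na \leq b$ unless $a = 0$; hence $G \in \Arch$. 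Conversely, if $G$ is a totally ordered archimedean group and $0 < g$, then for any $h \in G$ the archimedean property gives $|h| \leq ng$ for some $n$ (otherwise $ng < |h|$ for all $n$ would violate archimedeanness applied to $g$ and $|h|$), so $h \in G(g)$ and $G(g) = G = g^{\perp\perp}$; thus (4) holds and $G \in \M$.

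For part (2), suppose $G \in \M$ and let $u$ be a weak order unit, so $u^\perp = \{0\}$ and hence $u^{\perp\perp} = G$. By condition (4), $G(u) = u^{\perp\perp} = G$, which is precisely the statement that $u$ is a strong order unit.

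I do not expect a serious obstacle here, as both parts follow quickly once the polar computations are in place; the only point requiring a little care is the equivalence between "$G(g) = G$ for all $g \neq 0$" and the archimedean property in part (1), where one must correctly invoke the description $G(g) = \{h : |h| \leq n|g| \text{ for some } n\}$ from the preliminaries and check both implications against the definition of $\Arch$.
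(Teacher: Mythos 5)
Your proof is correct and matches the paper's approach: the paper states this proposition without proof as an immediate consequence of Theorem \ref{Main}, and deriving both parts from condition (4) there ($G(g)=g^{\pperp}$ for all $g$), together with the observation that in a totally ordered group $g^{\perp}=\{0\}$ and hence $g^{\pperp}=G$ for every $g\neq 0$, is exactly the intended argument. The only cosmetic point is that in part (1) you verify condition (4) only for $g>0$, but since $G(g)=G(|g|)$ and $g^{\pperp}=|g|^{\pperp}$, the cases $g<0$ and $g=0$ follow at once.
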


An $\ell$-group $G$ is called hyperarchimedean if every image of $G$ under an $\ell$-group homomorphism is archimedean (equivalently: $G = G(a) \oplus a^{\perp}$ for every $a \in G$, see \cite[Theorem 55.1]{darnel}); and $G$ is called projectable if $G = a^{\pperp} \oplus a^{\perp}$ for every $a \in G$. Another consequence of \ref{Main} is the following observation of Bigard \cite[Th\'{e}or\`{e}m 4]{bigard}:

\begin{proposition}\label{bigard}
The $\ell$-group $G$ is hyperarchimedean if and only if $G$ is a projectable  and $G\in \M$.
\end{proposition}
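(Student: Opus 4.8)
The plan is to prove both implications using the characterizations of $\M$ established in Theorem \ref{Main}, principally condition (4), together with the two descriptions of hyperarchimedean and projectable $\ell$-groups recalled just before the statement. Throughout, fix $g \in G$ and work with $g \geq 0$ without loss of generality, since $G(g) = G(|g|)$ and $g^{\perp} = |g|^{\perp}$.

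For the forward direction, assume $G$ is hyperarchimedean. By the recalled equivalence, $G = G(g) \oplus g^{\perp}$ for every $g \in G$. First I would show $G$ is projectable, i.e.\ $G = g^{\pperp} \oplus g^{\perp}$: since $g^{\perp} \cap g^{\pperp} = \{0\}$ always holds, it suffices to exhibit the decomposition, and the inclusion $G(g) \subseteq g^{\pperp}$ (which always holds) combined with $G = G(g) \oplus g^{\perp}$ forces $g^{\pperp} = G(g) \oplus (g^{\perp} \cap g^{\pperp}) = G(g)$, giving at once both $G = g^{\pperp} \oplus g^{\perp}$ and the equality $G(g) = g^{\pperp}$. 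That last equality is precisely condition (4) of Theorem \ref{Main}, so $G \in \M$. Thus the hyperarchimedean hypothesis delivers projectability and $\M$ essentially simultaneously, by squeezing $G(g)$ between itself and $g^{\pperp}$ via the direct sum.

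For the converse, assume $G$ is projectable and $G \in \M$. By condition (4) we have $G(g) = g^{\pperp}$ for every $g$, and by projectability $G = g^{\pperp} \oplus g^{\perp}$. Substituting the first into the second yields $G = G(g) \oplus g^{\perp}$ for every $g \in G$, which is exactly the characterization of hyperarchimedean $\ell$-groups recalled above. Hence $G$ is hyperarchimedean, completing the equivalence.

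I do not anticipate a serious obstacle here, as the result is essentially a bookkeeping exercise once condition (4) is in hand; the only point requiring a little care is the identification $g^{\pperp} = G(g)$ in the forward direction. The cleanest way to secure it is to note that $g \in G(g) \subseteq g^{\pperp}$ and that $g^{\pperp}$ meets $g^{\perp}$ trivially, so intersecting the decomposition $G = G(g) \oplus g^{\perp}$ with the convex $\ell$-subgroup $g^{\pperp}$ and using modularity of the lattice of convex $\ell$-subgroups gives $g^{\pperp} = G(g) \oplus (g^{\perp} \cap g^{\pperp}) = G(g)$. Alternatively one can cite the standard fact that in a projectable $\ell$-group the polar $g^{\pperp}$ is the cardinal summand complementary to $g^{\perp}$, and compare summands. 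Either route makes the forward implication immediate.
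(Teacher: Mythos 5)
Your proof is correct, and it follows exactly the route the paper intends: the paper states this proposition without proof, citing Bigard and presenting it as a consequence of Theorem \ref{Main}, and your argument supplies precisely the omitted details via condition (4), namely squeezing $G(g)$ between itself and $g^{\pperp}$ inside the decomposition $G = G(g) \oplus g^{\perp}$ to get $G(g) = g^{\pperp}$ in the forward direction, and substituting $g^{\pperp} = G(g)$ into projectability for the converse. Both your modularity argument and the elementwise alternative (writing $x \in g^{\pperp}$ as $a+b$ with $a \in G(g)$, $b \in g^{\perp}$, and noting $b = x - a \in g^{\pperp} \cap g^{\perp} = \{0\}$) are valid, so there is nothing to repair.
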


 We now supply the reader with many further examples of $G \in \M$.

\begin{example}\label{nonarch}
Let $G$ be any hyperarchimedean $\ell$-group without strong order units. Consider the lex-extension of $G$ by $\Z$ (or any subgroup of $\R$). Recall that the order in $H=\overleftarrow{G\times \Z}$ is defined by $(a,n)\leq (b,m)$ if either $n<m$ or $n=m$ and $a\leq b$. The $\ell$-group $H \notin \Arch$. However, since we have only adjoined a strong unit it follows that $H \in \M$.
On the other hand, if $G$ does have a strong unit, say $u\in G$, then $(u,0)$ is not a strong unit of $H$ but $(u,0)^{\pperp}=H$ so that $H \notin \M$ by \ref{wu->su}. More generally, using results from \cite{wynne}, one can show that if $G, H \in (M)$ and $G \times_{\pi} H$ is an upper extension such that $\pi(G(g))$ has no weak unit for every $g > 0$ in $G$, then $G \times_{\pi} H \in \M$.
\end{example}

\begin{example}
Every existentially closed (e.c.) abelian $\ell$-group is in $\M$. Indeed, building on work of Glass-Pierce and Saracino-Wood, Weispfenning \cite[Theorem 3.6.3(2)]{W} showed that an abelian $\ell$-group is e.c. if and only if it is divisible, has the ``splitting property", has no basic elements, has no weak units, and has the ``disjunction property" (this last property says that $\mathbf{C}(G)$ is disjunctive).
Since a basic result from model theory implies that every abelian $\ell$-group can be embedded in an e.c. abelian $\ell$-group, it follows that every abelian $\ell$-group has an extension in $\M$. In particular, there are numerous non-archimedean  $\ell$-groups in $\M$. For more about e.c. abelian $\ell$-groups, see \cite{W} and \cite{scowcroft} and the references therein; see \cite{wynne} and \cite{wynne2} for some constructions of ``concrete" non-archimedean e.c. abelian $\ell$-groups.
\end{example}

Next we look at how (M) behaves with regard to some common operations and extensions.

\begin{proposition}\label{products}
$\M$ is preserved under each of the following:
\begin{itemize}
\item[(1)] finite direct products;
\item[(2)] arbitrary direct sums;
\item[(3)] $a$-extensions (e.g., the divisible hull).
\end{itemize}
\end{proposition}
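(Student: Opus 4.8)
The plan throughout is to verify condition~(4) of Theorem~\ref{Main}: for the ambient $\ell$-group $K$ it suffices to check that $K(x)=x^{\pperp}$ for every $x\in K$, since by that theorem this is equivalent to $K\in\M$. In each of the three cases I will describe both the principal convex $\ell$-subgroup $K(x)$ and the double polar $x^{\pperp}$ in terms of the corresponding data in the factors (for (1) and (2)) or in the subgroup $G$ (for (3)), and then feed in the hypothesis that the pieces belong to $\M$.

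For (1) and (2), write $G=\prod_i G_i$ (finitely many factors) or $G=\bigoplus_i G_i$, each $G_i\in\M$. Because the lattice operations are computed coordinatewise, for $g=(g_i)$ one checks directly that $g^{\perp}=\{h:h_i\in g_i^{\perp}\ \text{for all}\ i\}$ and, taking polars a second time (the backward inclusion using test elements supported in a single coordinate), that $g^{\pperp}=\{h:h_i\in g_i^{\pperp}\ \text{for all}\ i\}$. Likewise $|h|\le n|g|$ is a coordinatewise condition, so $G(g)=\{h:h_i\in G_i(g_i)\ \text{for all}\ i\}$; the only subtlety is the uniform bound $n$, which is harmless since in the finite-product case, and in the direct-sum case (where $g$, and hence $G(g)$ and $g^{\pperp}$, are supported on the finite set $\{i:g_i\neq 0\}$ because $g_i^{\pperp}=\{0\}$ when $g_i=0$), one may take $n=\max_i n_i$ over finitely many coordinates. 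Since $G_i(g_i)=g_i^{\pperp}$ in each $G_i$ by Theorem~\ref{Main}(4), these descriptions give $G(g)=g^{\pperp}$, so $G\in\M$. This is precisely where arbitrary direct products are excluded: without finite support the uniform bound fails and $G(g)$ can be strictly smaller than $g^{\pperp}=\prod_i g_i^{\pperp}$.

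For (3), I would use that an $a$-extension $G\le H$ is an $\ell$-subgroup extension in which every $0<h\in H$ is archimedean-equivalent to some $0<g\in G$, i.e. $h\le mg$ and $g\le nh$ for suitable $m,n\in\N$. This gives $H(h)=H(g)$ and, since $x^{\perp}=(kx)^{\perp}$ for every $k\in\N$, also $h^{\perp}=g^{\perp}$ and hence $h^{\pperp}=g^{\pperp}$; so it suffices to prove $g^{\pperp_H}\subseteq H(g)$ for $g\in G$ (the reverse inclusion always holds). Given $0\le k\in g^{\pperp_H}$, choose $0<c\in G$ archimedean-equivalent to $k$, so that $H(k)=H(c)$ and $k^{\pperp_H}=c^{\pperp_H}$; since $g^{\pperp_H}$ is a polar, hence a $d$-subgroup, membership $k\in g^{\pperp_H}$ forces $c\in c^{\pperp_H}=k^{\pperp_H}\subseteq g^{\pperp_H}$. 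Now $c\in G\cap g^{\pperp_H}$, and because $G$ is a sublattice of $H$ the relevant meets agree, giving the easy inclusion $G\cap g^{\pperp_H}\subseteq g^{\pperp_G}$; thus $c\in g^{\pperp_G}=G(g)$ by $G\in\M$. Finally $c\in G(g)\subseteq H(g)$, whence $k\in H(k)=H(c)\subseteq H(g)$, as required.

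The routine cases are (1) and (2), where everything is a coordinatewise bookkeeping exercise; the real work is (3). I expect the main obstacle there to be that polars are \emph{not} lattice-theoretic invariants of $\fC(G)$, so one cannot simply transport them along the $a$-extension lattice isomorphism $\fC(G)\cong\fC(H)$. Instead one must track polars in $H$ explicitly and apply archimedean equivalence twice -- once to replace an arbitrary $h\in H$ by an element of $G$, and again to replace an arbitrary member $k$ of $g^{\pperp_H}$ by an element $c\in G$ -- using that polars are $d$-subgroups to remain inside $g^{\pperp_H}$ after the second replacement. The pleasant technical point is that only the easy inclusion $G\cap g^{\pperp_H}\subseteq g^{\pperp_G}$ is needed, so no delicate analysis of how double polars in $H$ restrict to $G$ is required.
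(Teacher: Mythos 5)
Your proof is correct. For (1) and (2) it is essentially the paper's argument: both verify condition (4) of Theorem \ref{Main} for a direct sum (with finite products as a special case), the paper by producing, from $h \notin G(g)$, a disjointness witness $k$ supported in a single coordinate, and you by the equivalent coordinatewise identifications $G(g)=\{h : h_i\in G_i(g_i) \mbox{ for all } i\}$ and $g^{\pperp}=\{h : h_i\in g_i^{\pperp} \mbox{ for all } i\}$, with finite support absorbing the uniform bound $n$ exactly as you say (one small slip of labels: it is the \emph{forward} inclusion $g^{\pperp}\subseteq\{h : h_i\in g_i^{\pperp}\}$ that uses the single-coordinate test elements, not the backward one; this is harmless). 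For (3), however, you take a genuinely different route. The paper invokes the fact that $G\leq H$ is an $a$-extension if and only if contraction $S\mapsto S\cap G$ is a lattice isomorphism from $\fC(H)$ to $\fC(G)$, and concludes the transfer of $\M$ in one line; as you correctly observe, polars and $d$-subgroups are not lattice-theoretic invariants of $\fC(\cdot)$, so that one-liner really rests on characterization (6) of Theorem \ref{Main}: the Wallman disjunction property is a statement about the bottom element and the compact elements (the principal convex $\ell$-subgroups) of the frame $\fC(\cdot)$, both of which any such isomorphism preserves. Your argument instead verifies (4) directly at the element level, applying archimedean equivalence twice --- once to replace $h\in H$ by $g\in G$ with $H(h)=H(g)$ and $h^{\pperp_H}=g^{\pperp_H}$, and once to replace $k\in g^{\pperp_H}$ by an equivalent $c\in G$, staying inside $g^{\pperp_H}$ because polars are $d$-subgroups --- and then uses only the easy inclusion $G\cap g^{\pperp_H}\subseteq g^{\pperp_G}$ together with $g^{\pperp_G}=G(g)$ (valid since $G\in\M$) and $G(g)\subseteq H(g)$ to conclude $k\in H(c)\subseteq H(g)$. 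Every step checks out, including the implicit reductions to positive elements and $k=0$. What each approach buys: the paper's is shorter and shows that $\M$ transfers along \emph{any} extension inducing an isomorphism of the frames of convex $\ell$-subgroups, while yours is self-contained, avoids invoking condition (6), and makes explicit precisely which consequences of the $a$-extension property are actually needed.
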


\begin{proof}
(1). This is a special case of (2).

(2). Suppose $G = \bigoplus_{i \in I} G_i$ with $G_i \in \M$ ($i \in I$). We show that $G$ obeys condition (4) of \ref{Main}. Suppose $0 \leq a , b \in G$ and $b \notin G(a)$. Let $b = (b_i)$ and $a = (a_i)$. If $b_i \in G_i(a_i)$ for each nonzero coordinate $b_i$ of $b$, then $b \in G(a)$, contrary to our supposition. So $b_i \notin G_i(a_i)$ for some $b_i \neq 0$. Since $G_i \in \M$, it follows that there is $c_i \in G_i$ such that $a_i \wedge c_i = 0$ and $b_i \wedge c_i > 0$.  Let $c \in G$ have $c_i$ as its $i$th coordinate and zero in every other coordinate.  Then $a \wedge c= 0$ and $b \wedge k > 0$, so $b \notin a^{\perp \perp}$. Hence $G \in \M$.

(3). Recall that an $\ell$-group embedding of $G$ into $H$ is said to be an {\it $a$-extension} if for every $b\in H$ there is a $a\in G$ such that $H(b)=H(a)$. The extension is an $a$-extension if and only if the contraction map $S \mapsto S \cap G$ from $\fC(H)$ to $\fC(G)$ is a lattice isomorphism (see \cite{darnel} for more about $a$-extensions). It follows that $a$-extensions preserve (M).
\end{proof}

\begin{remark}
No product of infinitely many non-trivial members of $\M$ is in $\M$. Indeed, suppose that $\{G_i\}_{i\in I}$ is a collection of non-trivial $\ell$-groups such that each $G_i \in \M$ ($i \in I$) and $I$ is infinite. Choose an infinite sequence in $I$, say $\{ i_n \}_{n \in \mathbb{N}}$. For each $n \in \mathbb{N}$, let $0<a_n\in G_{i_n}$ and consider $b_1, b_2$ in $G = \prod_{i \in I} G_i$ given by:
$$
b_1(i)=\begin{cases}
           a_n, & \mbox{if } i=i_n \\
           0, & \mbox{otherwise}.
         \end{cases}
\hbox{ and } \hspace{.1in}
b_2(i)=\begin{cases}
           na_n, & \mbox{if } i=i_n \\
           0, & \mbox{otherwise}.
         \end{cases}
$$
The reader can check that $b_2 \in b_1^{\pperp} \setminus G(b_1)$ in $G$, so condition (4) of \ref{Main} fails in $G$.
\end{remark}

\begin{remark}
Many types of extensions do  not preserve (M). For example, we show that rigid extensions and the lateral completion can fail to transfer (M). Recall that an extension $G\leq H$ is said to be {\it rigid} if for each $b\in H$ there is some $a\in G$ such that $b^{\pperp_H} = a^{\pperp_H}$.  Every $a$-extension is rigid, but the converse fails.  

Consider $C(X,\Z)$, the $\ell$-subgroup of $C(X)$ consisting of the integer-valued functions. Assume that $X$ is zero-dimensional, i.e., $X$ has a base of clopen sets. Straightforward arguments show that $C(X,\Z)$ is always projectable, and that $X$ is pseudo-compact if and only if $C(X,\Z)$ is hyperarchimedean. Therefore, by \ref{bigard}, we have that $C(X,\Z) \in \M$ if and only if $X$ is pseudo-compact and zero-dimensional. Now $C^*(X,\Z)$, the bounded functions in $C(X,\Z)$, is isomorphic to $C(\beta_0X,\Z)$, where $\beta_0 X$ denotes the Banaschewski compactification of $X$ (\cite{bana55}).
Thus, for any non-pseudo-compact zero-dimensional $X$, one has $C^*(X,\Z) \in \M$ but the rigid extension $C(X,\Z) \notin \M$.

Regarding the lateral completion, if $\beta \N$ is the \v{C}ech-Stone compactification of $\N$, then $C(\beta \N, \Z) \in \M$, but its lateral completion is $D(\beta \N,\Z)$, which is not in $\M$ because it has weak units that are not strong. 
\end{remark}

A class $\cR$ of $\ell$-groups is called a radical class if $\cR$ is nonempty and closed under convex $\ell$-subgroups, under isomorphic images, and under joins of convex sub-$\ell$-groups (see \cite[Definition 36.1]{darnel}). For example, the class  $\A$ of abelian $\ell$-groups and the class $\Arch$ are both radical classes.

Originally we proved that $\M \cap \A$ is a radical class. We thank the anonymous reviewer for showing us how to use \ref{reflemma} to simplify the proof and remove the abelian hypothesis.

\begin{theorem}\label{rad}
$\M$ is a radical class.
\end{theorem}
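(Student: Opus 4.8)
The plan is to verify the three closure conditions in the definition of a radical class for $\M \cap \A$. Nonemptiness is immediate (for instance $\{0\}$, or any totally-ordered archimedean group, lies in $\M \cap \A$ by Proposition~\ref{wu->su}). Closure under isomorphic images is clear, since both $\M$ and $\A$ are purely order/algebraic conditions. Since $\A$ is itself a radical class, I only need to check that $\M$-membership is inherited by convex $\ell$-subgroups and by joins of convex $\ell$-subgroups, working throughout inside abelian $\ell$-groups. The first of these, closure under convex $\ell$-subgroups, is already handed to us: condition~(7) of Theorem~\ref{Main} says exactly that if $G \in \M$ then every $H \in \fC(G)$ is in $\M$. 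So the entire content of the theorem reduces to the single hard case.

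The main obstacle is closure under arbitrary joins: given $G \in \A$ and a family $\{H_j\}_{j \in J}$ of convex $\ell$-subgroups of $G$ with each $H_j \in \M$, I must show $\bigvee_{j} H_j \in \M$. Here I expect the key reduction to be the compactness built into condition~(4): to test whether $K := \bigvee_j H_j$ lies in $\M$, it suffices by Theorem~\ref{Main}(4) to show $K(k) = k^{\pperp_K}$ for each $0 \leq k \in K$, and this is a statement about the single principal convex $\ell$-subgroup $K(k)$. The plan is to show that $K(k)$ is already in $\M$; by condition~(8) of Theorem~\ref{Main} this will give $K \in \M$. Now a positive element $k$ of the join $K = \bigvee_j H_j$ satisfies $k \leq x_1 + \cdots + x_n$ for finitely many positive $x_i \in H_{j_i}$, so $K(k) \subseteq H_{j_1} \vee \cdots \vee H_{j_n}$. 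This localizes the problem to \emph{finite} joins of $\M$-groups inside the abelian $G$.

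For the finite-join case I would argue by induction, so the crux is the join $H_1 \vee H_2$ of two $\M$-members. This is exactly where Lemma~\ref{lemma} does the work. Given disjoint positive elements I can use Lemma~\ref{lemma}(2) directly; the point of the general disjointification identity recalled in Section~2, namely $G(a \vee b) = G(a') \vee G(b)$ with $a' \wedge b = 0$, is that I can replace an arbitrary pair of generators by a disjoint pair generating the same join, while each of $G(a')$, $G(b)$ is a convex $\ell$-subgroup of an $\M$-group (either $H_1$ or $H_2$) and hence in $\M$ by Theorem~\ref{Main}(7). Then Lemma~\ref{lemma}(2) yields that the join of the two disjoint pieces is in $\M$, and Lemma~\ref{lemma}(1), which controls quotients, lets me transfer $\M$ across the exact sequence relating $H_1 \vee H_2$ to $H_1$ and the image of $H_2$ in $(H_1 \vee H_2)/H_1$. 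I expect the delicate bookkeeping to be in organizing this transfer: showing that the relevant quotient of $H_1 \vee H_2$ is a homomorphic image to which Lemma~\ref{lemma}(1) applies, and that membership in $\M$ pulls back through the short exact sequence via condition~(4). Once $H_1 \vee H_2 \in \M$ is established, induction gives all finite joins, the displayed containment $K(k) \subseteq H_{j_1} \vee \cdots \vee H_{j_n}$ together with Theorem~\ref{Main}(7) gives $K(k) \in \M$, and Theorem~\ref{Main}(8) closes the argument.
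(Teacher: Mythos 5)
Your skeleton is the same as the paper's: nonemptiness and isomorphism-closure are trivial, closure under convex $\ell$-subgroups is Theorem \ref{Main}(7), and the compactness reduction of the arbitrary join to binary joins of principal convex $\ell$-subgroups $G(g)\vee G(h)=G(g\vee h)$ with $G(g),G(h)\in \M$ is exactly how the paper localizes the problem. But the crux --- the case $g\wedge h>0$ --- is where your proposal has a genuine gap, in fact two. First, the disjointification step is misquoted: the identity from Section 2 is $G(a\vee b)=G(a')\vee G(b)=G(a)\vee G(b')$ where $a'=a-(a\wedge b)$, $b'=b-(a\wedge b)$, and $a'\wedge b'=0$. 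There is \emph{no} disjoint pair generating the same join: the disjoint pair $(a',b')$ generally generates less (take $a=b>0$, so $a'=b'=0$ while $G(a\vee b)=G(a)\neq\{0\}$), and in the pair $(a',b)$ that does generate $G(a\vee b)$, the elements need not be disjoint (in $\Z^2$ with $a=(2,1)$, $b=(1,2)$ one gets $a'=(1,0)$ and $a'\wedge b=(1,0)>0$). So Lemma \ref{lemma}(2) is never handed a disjoint pair, and the non-disjoint case is untouched.

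Second, your fallback --- transferring $\M$ ``across the short exact sequence'' relating $H_1\vee H_2$ to $H_1$ and $(H_1\vee H_2)/H_1$ --- cannot be organized into a proof, because $\M$ is not closed under extensions: for $H=\overleftarrow{\Z\times\Z}$ (lexicographic), the kernel and quotient are both $\Z\in\M$, yet $H$ is totally ordered and non-archimedean, hence $H\notin\M$ by Proposition \ref{wu->su}(1). Moreover Lemma \ref{lemma}(1) points the wrong way: it assumes the ambient group is \emph{already} in $\M$ and that the kernel is a \emph{principal} convex $\ell$-subgroup, so applying it to $(H_1\vee H_2)/H_1$ is circular. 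What the paper actually does is pass to $F=G(g\vee h)/G(g\wedge h)$, where the images $\rho(g),\rho(h)$ \emph{are} disjoint; Lemma \ref{lemma}(1) is applied to the known-$\M$ groups $G(g)$ and $G(h)$ (quotienting each by the principal subgroup $G(g\wedge h)$), and Lemma \ref{lemma}(2) then gives $F\in\M$. The real work is the elementwise pullback that replaces your ``transfer'': given $0\leq c,d$ with $c\nleq kd$ for all $k$, one splits into cases according to whether this failure survives in $F$. If it does, a witness $\rho(z)$ is pulled back via $z'=z\wedge c$, written $z'=r+s$ with $r\in G(g)$, $s\in G(h)$, and $\M$ for $G(g)$ produces $0<q\leq c$ with $q\wedge d=0$; if it collapses, a Riesz decomposition $c=d_1+\cdots+d_k+e_1+\cdots+e_n$ reduces to $c\leq g\wedge h$, i.e.\ $c\in G(g)\cap G(h)$, where $G(g)\in\M$ applies directly. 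That case analysis is the missing core of your argument, and nothing in your outline substitutes for it.
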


\begin{proof}
Since $\M$ is closed under convex $\ell$-subgroups by  \ref{Main}, and is obviously closed under isomorphic copies, it remains to check that $\M$ is closed under joins of convex $\ell$-subgroups.

First, note that if $a,b \geq 0$ in $G$, and if $G(a)$ and $G(b)$ are in $\M$, then  $G(a) \vee G(b) = G(a \vee b) \in \M$ by \ref{reflemma}. Indeed, if $L \coloneqq \mathbf{C}(G(a \vee b))$, then $L$ has top $G(a \vee b)$, bottom $ \{0 \}$, and $u_0 = G(a)$ and $u_1=G(b)$ with $u_0 \vee u_1 = G(a \vee b)$. And $\idl(u_0) = \mathbf{C}(G(a))$ and $\idl(u_1) = \mathbf{C}(G(b))$ are disjunctive because $G(a)$ and $G(b)$ are in $\M$ by hypothesis. 

So, by induction, $\bigvee_{i=1}^n G(a_i) \in \M$ if  $G(a_i) \in \M$ for each $i \in \{ 1, \dots, n \}$. 

Finally, suppose $G$ is a (possibly non-abelian) $\ell$-group with convex $\ell$-subgroups $A_{\lambda}$, for $\lambda \in \Lambda$, such that $A_{\lambda} \in \M$ for every $\lambda \in \Lambda$. Were $\bigvee_{\lambda \in \Lambda} A_{\lambda} \notin \M$, then some finite join $\bigvee_{i=1}^n G(a_i)$, with $a_i \in A_{\lambda_i}$ for $i \in \{1, \dots, n \}$, would fail to be in $\M$, but $G(a_i) \in \M$ for each $i \in \{ 1, \dots, n \}$ by \ref{Main} since $A_{\lambda_i} \in \M$ by hypothesis. Thus, $\bigvee_{\lambda \in \Lambda} A_{\lambda} \in \M$.
\end{proof}

\begin{remark}
\ref{rad} raises the question: Are there any non-abelian $G \in \M$? We do not know.
\end{remark}

%%%%%%%%%%%%%%%%%%%%%%%%%%%%%%%%%%%%%%%%%%%

\section{$\Y$ and $\M \cap \Arch$}\label{sectionY}

Recall that $G \in \Y$ means that $\fC(G)$ is a Yosida frame. In this section we analyze $\Y$ and compare it with $\M \cap \Arch$.

%In this section we investigate the $\ell$-groups in $\M \cap \Arch$. The following class of $\ell$-groups, which was introduced by \Mart and Zenk \cite{mz06}, will play a role in our characterization of the $\mathbf{W}$-objects in $\M$.

%\begin{definition}\label{yos}
%An $\ell$-group $G$ is said to be a {\it Yosida $\ell$-group}, and we write $G \in \Y$, if every principal convex $\ell$-subgroup of $G$ is an intersection of maximal convex $\ell$-subgroups.
%\end{definition}

An $\ell$-group $G$, even if it is a $\bf{W}$-object, might  have $\Max(G) = \emptyset$ (e.g., $G=D(X)$, where $X$ is the absolute of the unit interval $[0,1]$).
We say that $G$ has  enough maximal convex $\ell$-subgroups (emc, for the sake of abbreviation) if $\bigcap \Max(G) = \{ 0 \}$ (note: if $G \neq \{ 0 \}$, this implies that $\Max(G) \neq \emptyset$). Observe that $G$ having emc is equivalent to saying that $G$ is embeddable in a product of copies of $\R$. Such $\ell$-groups are obviously archimedean and, in fact, they form a proper subclass of $\Arch$; see \ref{MnotY} below for a $G \in \M \cap \Arch$ without emc. Since $\{0\}$ is a principal convex $\ell$-subgroup, if $G \in \Y$, then $G$ has emc, and so $\Y \subseteq \Arch$. We can say more: see Theorem \ref{Yosida} below.

\begin{notation}\label{inducedemb}
Suppose $\cM \subseteq \Max(G)$ with $\bigcap \cM = \{ 0 \}$. Then, by H\"{o}lder's theorem, $G/m$ is isomorphic to a subgroup of $\mathbb{R}$ for each $m \in \cM$, and one may view the canonical surjection $\pi_m : G \to G/m$ as an $\ell$-homomorphism from $G$ into $\R$. Since $\bigcap \cM = \{ 0 \}$, we thus have the embedding $\psi \colon G \to \mathbb{R}^{\cM}$, given by $\psi(a) = (\pi_m(a))_{m \in \cM}$, which we call \emph{the embedding induced by $\cM$}. In that situation, for $a \in G$, we write $\sC_{\cM}(a)$,  $\sZ_{\cM}(a)$, and $I(\sZ_{\cM}(a))$ for $U(a) \cap \cM$, $V(a) \cap \cM$, and $\{ b \in G : \sZ_{\cM}(b) \supseteq \sZ_{\cM}(a) \}$, respectively. This fits with our notation from \ref{zsetnotation}, though here we replace the subscript ``$\psi$" by ``$\cM$" since $\cM$ determines $\psi$.
\end{notation}

The following, which is our main result concerning $\Y$, extends \cite[Proposition 5.2]{mz06}. 

\begin{theorem}\label{Yosida}
Let $G$ be an $\ell$-group. Then the following are equivalent.
\begin{enumerate}[label={\rm \arabic*.}, nolistsep]
\item[(1)] $\mathbf{C}(G)$ is ideally conjunctive.
\item[(2)] $G\in \Y$, that is, $\fC(G)$ is a Yosida frame.
\item[(3)] There is an index set $X$ and an embedding $\varphi \colon G \to \R^X$ such that $G(a) = I(\sZ_{\varphi}(a))$ for every $a \in G$.
\item[(4)] There is $\cM \subseteq \Max(G)$ such that $\bigcap \cM = \{ 0 \}$ and $G(a) = I(\sZ_{\cM}(a))$ for every $a \in G$.
\item[(5)] There is $\cM \subseteq \Max(G)$ such that $\bigcap \cM = \{ 0 \}$ and $\cM$ is patch dense in $\Spec(G)$.
\item[(6)] $H \in \Y$ for every $\ell$-subgroup $H$ of $G$.
\item[(7)] $G/G(a)$ has emc for each $a\in G$.
\end{enumerate}
\end{theorem}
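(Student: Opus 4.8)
Throughout I would lean on the containment $G(a)\subseteq I(\sZ_{\varphi}(a))\subseteq a^{\pperp}$ recorded in \ref{zsetnotation}, valid for any embedding $\varphi$ of $G$ into a product of totally ordered groups, and in particular for the embedding $\psi$ induced by a family $\cM$ as in \ref{inducedemb}; thus $G(a)\subseteq I(\sZ_{\cM}(a))$ \emph{always} holds, so in (2)--(4) the real content is the reverse inclusion. I would also note at the outset that each of (1)--(4) forces $G$ to be abelian and archimedean: a maximal convex $\ell$-subgroup $M$ has $G/M$ totally ordered with no proper convex $\ell$-subgroup, hence $G/M\hookrightarrow\R$ by H\"{o}lder, while an embedding into a power of $\R$ makes $G$ abelian; so the characterization $M\in\Max(G)\iff G/M\hookrightarrow\R$ is available throughout. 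The plan is then to establish the four-way equivalence (1)$\iff$(3), (2)$\iff$(3), (3)$\iff$(4), and finally the two ``outliers'' (1)$\iff$(6) and (1)$\iff$(5). For (1)$\iff$(3): if $G\in\Y$, take $\cM=\Max(G)$; since $\{0\}=G(0)$ is an intersection of maximal convex $\ell$-subgroups we get $\bigcap\Max(G)=\{0\}$, and for every $a$ the relation $G(a)\subseteq M\iff a\in M$ together with $G\in\Y$ gives $G(a)=\bigcap\{M\in\Max(G):a\in M\}$; unwinding the definition, $I(\sZ_{\cM}(a))=\{b:\forall M\in\cM,\ a\in M\Rightarrow b\in M\}=\bigcap\{M\in\cM:a\in M\}$, so (3) follows. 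Conversely, (3) exhibits $G(a)=I(\sZ_{\cM}(a))=\bigcap\{M\in\cM:a\in M\}$ as an intersection of maximal convex $\ell$-subgroups (with the usual empty-intersection convention when $a$ lies in no $M$), so $G\in\Y$.

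For (2)$\iff$(3): the direction (3)$\Rightarrow$(2) is immediate by taking $X=\cM$ and $\varphi=\psi$, since then $\sZ_{\psi}(a)=\sZ_{\cM}(a)$. For (2)$\Rightarrow$(3), given $\varphi\colon G\to\R^{X}$ set $\pi_{x}=\mathrm{ev}_{x}\circ\varphi$ and $M_{x}=\ker\pi_{x}$; whenever $\pi_{x}\neq 0$, $G/M_{x}$ is a nonzero subgroup of $\R$, so $M_{x}\in\Max(G)$. Put $\cM=\{M_{x}:\pi_{x}\neq 0\}$. Since $\varphi$ is injective and the coordinates with $\pi_{x}=0$ are vacuous, $\bigcap\cM=\{0\}$; and because $a\in M_{x}\iff\varphi(a)(x)=0$, the same vacuity shows that the conditions ``$\sC_{\cM}(b)\subseteq\sC_{\cM}(a)$'' and ``$\sC_{\varphi}(b)\subseteq\sC_{\varphi}(a)$'' coincide, whence $I(\sZ_{\cM}(a))=I(\sZ_{\varphi}(a))=G(a)$.

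The equivalence (3)$\iff$(4) is, I expect, the main obstacle, and its two halves mirror the arguments for (1)$\Rightarrow$(9)$\Rightarrow$(10) in \ref{Main}. For (4)$\Rightarrow$(3) I need only the reverse inclusion $I(\sZ_{\cM}(a))\subseteq G(a)$: take $b\in I(\sZ_{\cM}(a))$, i.e.\ $\sC_{\cM}(b)\subseteq\sC_{\cM}(a)$, and suppose toward a contradiction that $b\notin G(a)$ (one may assume $a,b\geq 0$). Since every convex $\ell$-subgroup is the intersection of the primes containing it (\cite[Proposition 10.7]{darnel}), there is a prime $P$ with $a\in P$ and $b\notin P$, so $U(b)\cap V(a)\neq\emptyset$; patch density of $\cM$ then supplies $M\in\cM\cap U(b)\cap V(a)$, that is, $M\in\sC_{\cM}(b)\setminus\sC_{\cM}(a)$ --- contradicting $\sC_{\cM}(b)\subseteq\sC_{\cM}(a)$. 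For (3)$\Rightarrow$(4), start with a nonempty patch-basic set $U(a)\cap V(b)$ ($a,b\geq 0$); a prime in it forces $a\notin G(b)=I(\sZ_{\cM}(b))$, so some $M\in\cM$ has $b\in M$ and $a\notin M$, i.e.\ $M\in U(a)\cap V(b)\cap\cM$, establishing density. The delicate point here is purely bookkeeping: keeping supports versus zero-sets, the patch basis, and the prime-separation step aligned.

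Finally, for (1)$\iff$(6), the correspondence theorem identifies $\Max(G/G(a))$ with $\{M\in\Max(G):M\supseteq G(a)\}$, so ``$G/G(a)$ has emc'' means exactly $G(a)=\bigcap\{M\in\Max(G):M\supseteq G(a)\}$, and requiring this for all $a$ is precisely the statement $G\in\Y$. For (1)$\iff$(5), the direction (5)$\Rightarrow$(1) is trivial; for (1)$\Rightarrow$(5), use (1)$\Rightarrow$(2) to fix $\varphi\colon G\to\R^{X}$ with $G(a)=I(\sZ_{\varphi}(a))$ for all $a\in G$, and restrict $\varphi$ to an arbitrary $\ell$-subgroup $H\leq G$. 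If $h'\in H$ satisfies $\sZ_{\varphi}(h')\supseteq\sZ_{\varphi}(h)$, then $h'\in I(\sZ_{\varphi}(h))=G(h)$, so $|h'|\leq n|h|$ in $G$ for some $n\in\N$; as $h',h\in H$ this gives $h'\in H(h)$. Hence $H(h)=I(\sZ_{\varphi|_{H}}(h))$, so $H$ satisfies condition (2) and therefore $H\in\Y$.
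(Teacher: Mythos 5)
Your proposal is correct and takes essentially the same route as the paper: the same reduction of (2)--(4) to the reverse inclusion $I(\sZ(a))\subseteq G(a)$, the same vacuous-coordinate bookkeeping for (2) $\Leftrightarrow$ (3), the same patch-density arguments (prime separation plus $b\notin G(a)=I(\sZ_{\cM}(a))$) for (3) $\Leftrightarrow$ (4), the same restriction-of-$\varphi$ argument for (5), and the same correspondence argument for (6). The only organizational difference --- proving (1) $\Leftrightarrow$ (3) directly via the identity $I(\sZ_{\cM}(a))=\bigcap\{M\in\cM : a\in M\}$ and citing the paper's abelian characterization of $\Max(G)$ to get $\ker\pi_x\in\Max(G)$, where the paper routes through (2) and verifies maximality of $N_x$ by a direct computation with $(nc)\wedge d$ --- is a harmless streamlining, not a gap, and your explicit flagging of the empty-intersection convention is a point the paper leaves tacit.
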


\begin{proof}
(1) $\Leftrightarrow$ (2). This follows from \ref{idealconj}.

(2) $\Leftrightarrow$ (3). This is just a rephrasing of \cite[Proposition 5.2]{mz06}.

(3) $\Leftrightarrow$ (4). That (4) implies (3) is immediate, just take $X=\cM$. Suppose (3) holds.  For each $x \in X$, let  $m_x = \varphi^{-1}(\{ f \in \R^X : f(x) = 0 \})$. One sees there are exactly two possibilities for each $m_x$: either $m_x = G$ or $m_x \in \Max(G)$. Let $\cM = \{ m_x : m_x \in \Max(G) \}$. Then clearly $\cM \subseteq \Max(G)$. Take $a \in \bigcap \cM$. Then $\varphi(a)(x) = 0$ for every $x \in X$, so $a = 0$ in $G$ because $\varphi$ is an embedding. Thus $\bigcap \cM = \{ 0 \}$. It remains to check that $G(a) = I(\sZ_{\cM}(a))$ for every $a \in G$.

Suppose $a \in G$ and $b \in I(\sZ_{\cM}(a)))$. Since $G(a) = I(\sZ_{\varphi}(a)))$ by hypothesis,  to show that $b \in G(a)$, which is all that is required, it suffices to show that $\sZ_{\varphi}(b) \supseteq \sZ_{\varphi}(a)$. Take $x \in \sZ_{\varphi}(a)$. Then $a \in m_x$. If $m_x = G$, then $b \in m_x$, so $x \in \sZ_{\varphi}(b)$. If $m_x \neq G$, then $m_x \in \cM$ so $m_x \in \sZ_{\cM}(a)$. Since $b \in I(\sZ_{\cM}(a))$ by hypothesis, it follows that $m_x \in \sZ_{\cM}(b)$, which means $b \in m_x$. Hence  $ x \in \sZ_{\varphi}(b)$.

(4) $\Rightarrow$ (5). Suppose $\cM \subseteq \Max(G)$ is such that $\bigcap \cM = \{ 0 \}$ and $G(a) = I(\sZ_{\cM}(a))$ for every $a \in G$. We  show that $\cM$ is patch dense in $\Spec(G)$. Suppose  $U(b) \cap V(a)$ is a nonempty basic open set of $\Spec(G)$. Since $U(b) \cap V(a) \neq \emptyset$, we know $b \notin G(a)$. Since $G(a) = I(\sZ_{\cM}(a))$, it follows that there is $m \in \cM$ such that $m \in \sZ_{\cM}(a)  \ssm \sZ_{\cM}(b)$. Thus $b \notin m$ and $a \in m$, so $m \in U(b) \cap V(a)$. Hence $\cM$ is patch dense in $\Spec(G)$.

(5) $\Rightarrow$ (4). Suppose $\cM \subseteq \Max(G)$ is such that $\bigcap \cM = \{ 0 \}$ and $\cM$ is patch dense in $\Spec(G)$. Take $a,b \in G$ such that $b \notin G(a)$. To show that $G$ obeys (3), it suffices to show that $b \notin I(\sZ_{\cM}(a))$.
 Consider the basic open set $U(b) \cap V(a)$. Since $b \notin G(a)$, the set $U(b) \cap V(a) \neq \emptyset$, so by hypothesis there is $m \in \cM$ with $m \in U(b) \cap V(a)$. It follows that $m \in \sZ_{\cM}(a) \ssm \sZ_{\cM}(b)$. Hence $b \notin I(\sZ_{\cM}(a))$.

(6) $\Rightarrow$ (1). Immediate.

(3) $\Rightarrow$ (6).  Suppose $G$ obeys (2) for some $X$ and $\varphi$. Take $H \leq G$ and $a \in H$, and let $\varphi'$ be the restriction of $\varphi$ to $H$. Then $\varphi'$ is an embedding and $b \in I(\sZ_{\varphi'}(a)) \subseteq I(\sZ_{\varphi}(a)) = G(a)$ implies $b \in H(a)$. Thus (2) holds for $H$, which implies $H \in \Y$.

(2) $\Leftrightarrow$ (7). Fix $0 < a \in G$, and let $\rho \colon G \to G/G(a)$ be the canonical surjection. For any $H \in \fC(G)$, if $G(a) \subseteq H$, then $\rho^{-1}(\rho(H)) = H$, and $H \in \Max(G)$ if and only if $\rho(H) \in \Max(G/G(a))$. Moreover, for any $\{ H_i \}_{i \in I} \subseteq \fC(G)$, one has $G(a) = \bigcap_{i \in I} H_i$ if and only if $\{ 0  \} = \bigcap_{i \in I} \rho(H_i)$. The desired equivalence follows.
\end{proof}

\begin{remark}
Note that \ref{Yosida}(6) implies that $G \in \Y$ if and only if $\Max(G)$ is patch dense in $\Spec(G)$.
\end{remark}

\begin{example}
Conrad (\cite[Theorem 1.1]{conrad74}) showed that an $\ell$-group is hyperarchimedean if and only if there is an index set $X$ and an embedding $\varphi \colon G \to \R^X$ such that ($\ast$) for any $0<a,b \in G$ there is an $n\in \N$ such that $n\varphi(a)(x)> \varphi(b)(x)$ for all $x\in \sC_{\varphi}(a)$. Such an embedding satisfies (2) of Theorem \ref{Yosida}: for any $0 < a \in G$, if $b \in I(\sZ_{\varphi}(a))$, then $\sZ_{\varphi}(b) \supseteq \sZ_{\varphi}(a)$, which implies $\sC_{\varphi}(b) \subseteq  \sC_{\varphi}(a)$, and so $b \in G(a)$ by ($\ast$). Therefore, every hyperarchimedean $\ell$-group is in $\Y$.
\end{example}

The following two results are easy consequences of \ref{Yosida}.

\begin{proposition}
If $G$ is a totally-ordered group, then $G \in \Y$ if and only if $G \in \Arch$.
\end{proposition}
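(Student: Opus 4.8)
The plan is to dispatch the two implications separately, with essentially all the work on the converse. For the forward direction there is nothing new to prove: it was already observed above that every Yosida $\ell$-group has emc and hence lies in $\Arch$, so $G \in \Y$ immediately gives $G \in \Arch$, whether or not $G$ is totally ordered.

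For the converse, suppose $G$ is totally ordered and archimedean; I may assume $G \neq \{0\}$, the trivial case being immediate. The key structural fact I would establish first is that the only convex $\ell$-subgroups of $G$ are $\{0\}$ and $G$. Indeed, if $H$ is a convex $\ell$-subgroup containing some $0 \neq a$, then $|a| \in H$, and for any $h \in G$ the archimedean property supplies $n \in \N$ with $|h| \leq n|a| \in H$; convexity then forces $h \in H$, so $H = G$. In particular $G(a) = G$ for every nonzero $a$, so the principal convex $\ell$-subgroups are exactly $\{0\} = G(0)$ and $G$.

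It then remains to exhibit each of these two as an intersection of maximal convex $\ell$-subgroups. Since $\{0\}$ is the unique proper convex $\ell$-subgroup of $G$, nothing proper strictly contains it, so it is a maximal convex $\ell$-subgroup; thus $\{0\} \in \Max(G)$ and $\bigcap \Max(G) = \{0\}$, exhibiting $\{0\}$ itself as such an intersection, while $G$ is the intersection of the empty subfamily of $\Max(G)$. Hence every principal convex $\ell$-subgroup is an intersection of maximal convex $\ell$-subgroups, i.e. $G \in \Y$.

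The argument is short, and the only point demanding care is the bookkeeping about empty and singleton intersections — representing $G$ as the empty intersection and recognizing $\{0\}$ as a genuinely maximal convex $\ell$-subgroup — rather than any real difficulty. A clean alternative that sidesteps this is to invoke the equivalence (1) $\Leftrightarrow$ (6) of Theorem \ref{Yosida}: for $a \neq 0$ one has $G/G(a) = \{0\}$, which trivially has emc, while $G/G(0) = G$ embeds in $\R$ by H\"{o}lder's theorem and so has emc as well; since $G/G(a)$ then has emc for every $a$, condition (6) holds and $G \in \Y$.
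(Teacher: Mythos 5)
Your proof is correct, but it takes a genuinely different route from the paper, which in fact prints no argument at all: it records this proposition as an ``easy consequence'' of Theorem \ref{Yosida}, the intended derivation being exactly the sort of thing you sketch in your closing alternative --- via (1) $\Leftrightarrow$ (6), or just as naturally via (2), taking the H\"{o}lder embedding $\varphi \colon G \to \R$ (i.e., $\R^X$ with $X$ a singleton) and checking $G(a) = I(\sZ_{\varphi}(a))$ for all $a$. Your main argument is instead a self-contained verification from Definition \ref{yos}: the computation that a totally ordered archimedean group has $\fC(G) = \{\{0\}, G\}$ is sound (trichotomy turns the failure of $|h| \leq n|a|$ for every $n$ into $n|a| \leq |h|$ for every $n$, and archimedeanness then forces $a = 0$), whence $\{0\} \in \Max(G)$ and every principal convex $\ell$-subgroup is $\{0\}$ or $G$. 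What your route buys is independence from the machinery of Theorem \ref{Yosida}; what it costs is precisely the convention you flag, namely that $G$ counts as the intersection of the empty family of maximal convex $\ell$-subgroups. You are right that the paper must be read with this convention: otherwise no $\ell$-group with a strong unit could lie in $\Y$, contradicting the paper's assertion that every hyperarchimedean $\ell$-group (e.g., $\R$) is in $\Y$, and also clause (6) itself, since $G/G(u) = \{0\}$ for a strong unit $u$. One small caveat on your alternative: it sidesteps the empty-intersection bookkeeping but not the structural computation, since identifying $G/G(a) = \{0\}$ for $a \neq 0$ still uses the fact that every nonzero element is a strong unit, and the trivial group's emc again rests on $\bigcap \emptyset = \{0\}$ --- a reading the paper's parenthetical note on emc implicitly endorses. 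The forward direction, quoted from the paper's observation that Yosida $\ell$-groups have emc and are therefore archimedean, is fine as stated.
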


\begin{proposition}
$\Y$ is preserved by each of the following:
\begin{itemize}
\item[(1)] finite direct products;
\item[(2)] arbitrary direct sums
\item[(3)] $a$-extensions.
\end{itemize}
\end{proposition}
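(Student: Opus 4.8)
The plan is to dispatch all three parts using the equivalent conditions in Theorem~\ref{Yosida}, in close parallel with the proof of Proposition~\ref{products} for $\M$. Part (1) is immediate from (2), since a finite direct product is a finite direct sum.

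For part (2), I would take $G = \bigoplus_{i \in I} G_i$ with each $G_i \in \Y$ and verify condition (2) of Theorem~\ref{Yosida}. Using (2) in each factor, fix embeddings $\varphi_i \colon G_i \to \R^{X_i}$ with $G_i(a) = I(\sZ_{\varphi_i}(a))$ for all $a \in G_i$. Setting $X = \bigsqcup_{i \in I} X_i$ (disjoint union) and letting $\varphi \colon G \to \R^X$ restrict on each block $X_i$ to $\varphi_i$ applied to the $i$th coordinate, one checks that $\varphi$ is an $\ell$-group embedding (injectivity and the homomorphism property are coordinatewise). The key point is then the identity $G(a) = I(\sZ_{\varphi}(a))$ for each $a = (a_i) \in G$. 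Both membership in $G(a)$ and the containment $\sZ_{\varphi}(h) \supseteq \sZ_{\varphi}(a)$ decompose coordinatewise (since $\sZ_{\varphi}(\cdot) \cap X_i = \sZ_{\varphi_i}(\cdot)$), so this reduces to the factorwise identities $G_i(a_i) = I(\sZ_{\varphi_i}(a_i))$, together with the observation that the finite-support condition forces $h_i = 0$ whenever $a_i = 0$ (as $G_i(a_i) = \{0\}$ there). Hence $G$ obeys (2) and $G \in \Y$. Alternatively, one may argue via condition (6): one has $G(a) = \bigoplus_i G_i(a_i)$, so $G/G(a) \cong \bigoplus_i G_i/G_i(a_i)$, and a direct sum of $\ell$-groups with emc again has emc, since each factor embeds in a power of $\R$ and hence so does the sum.

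For part (3), I would exploit that membership in $\Y$ is a purely lattice-theoretic property of the frame $\fC(G)$. By the Preliminaries, the principal convex $\ell$-subgroups are exactly the compact elements of $\fC(G)$, while the maximal convex $\ell$-subgroups are exactly its coatoms; so $G \in \Y$ asserts precisely that every compact element of $\fC(G)$ is a meet of coatoms. Since an $a$-extension $G \leq H$ induces a lattice isomorphism $\fC(H) \to \fC(G)$ via the contraction map $S \mapsto S \cap G$, and any such isomorphism preserves compactness, coatoms, and arbitrary meets, this property transfers between the two frames. Thus $H \in \Y$ whenever $G \in \Y$ (in fact the two are equivalent).

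The arguments are routine given Theorem~\ref{Yosida}; the only delicate point is the index bookkeeping in part (2) — specifically, confirming that the finite-support elements of the direct sum interact correctly with the coordinatewise zero-set condition, so that $I(\sZ_{\varphi}(a))$ admits no element whose support extends beyond that of $a$.
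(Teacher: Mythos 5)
Your proposal is correct and takes essentially the approach the paper intends: the paper gives no explicit proof, stating only that the proposition is an ``easy consequence'' of Theorem \ref{Yosida}, and your arguments---verifying condition (2) (or (6)) for a direct sum in parallel with the proof of Proposition \ref{products}, and transferring $\Y$ across an $a$-extension via the lattice isomorphism $\fC(H) \to \fC(G)$, under which compact elements (the principal convex $\ell$-subgroups) and coatoms (the maximal convex $\ell$-subgroups) correspond---are exactly the intended details. The one point you rightly flag, that a single $n$ with $|h| \leq n|a|$ exists, is indeed fine because $\supp(h) \subseteq \supp(a)$ is finite, so one may take the maximum of the finitely many coordinatewise bounds.
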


\begin{remark}
$\Y$ is not closed under arbitrary products, e.g., $\R \in (Y)$ but any an infinite direct product of copies of $\R$ is not in $\Y$ because \ref{Yosida}(5) fails. Since a totally-ordered group is in $\Y$ if and only if it is in $\Arch$, it follows that $\Y$ is not closed under homomorphic images: any $G \in \Y$ with a non-maximal prime subgroup will have a homomorphic image that is not in $\Arch$. %(The hyper-Yosida $\ell$-groups are precisely the hyperarchimedean $\ell$-groups.)
\end{remark}

\begin{example}\label{MZex}
This example, which is taken from \cite[Example 5.5]{mz06}, shows  two things: (i) that $\Y \nsubseteq \M$ and (ii) that there can be $\cP \subseteq \Max(G)$ with $\bigcap \cP = \{ 0 \}$ yet $\cP$ is not patch dense in $\Max(G)$. Let $(0,\infty)$ be the set of positive real numbers and let $G$ be the $\ell$-subgroup of $C((0,\infty))$ consisting of all piecewise linear functions. Clearly $G \in \Arch$. One checks that $\Max(G) = \{ M_t : t \in (0,\infty) \} \cup \{ M_0, M_{\infty} \}$, where
\begin{itemize}
\item $M_t = \{ f \in G : f(t) = 0 \}$ for $t \in (0,\infty)$;
\item $M_0 = \{ f \in G : \lim_{t \to 0} f(t) = 0 \}$;
\item $M_{\infty} = \{ f \in G : f \mbox{ is bounded} \}$.
\end{itemize}
By verifying that $\mathcal{M} = \Max(G)$ satisfies condition (4) of \ref{Yosida}, one may show that $G \in \Y$. Note that $\Max(G) \ssm \{ M_{\infty} \}$ is a subset of $\Max(G)$ with trivial intersection which is not patch dense in $\Max(G)$.
Note too that $G \notin \M$ since the constant function with value 1 is a weak unit that is not strong.
\end{example}

A characterization of $\M \cap \Arch$, like that of Theorem \ref{Yosida} for $\Y$, has so far eluded us, though the next result relates $\Y$ and $\M$ under the additional assumption of emc.

\begin{proposition}\label{MartYos}
Suppose $G$ has emc. If $G \in \M$, then $G \in \Y$.
\end{proposition}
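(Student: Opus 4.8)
The plan is to verify the definition of $\Y$ directly, by showing that for each $a \in G$ the principal convex $\ell$-subgroup $G(a)$ equals the intersection of all maximal convex $\ell$-subgroups containing it. Since $G$ has emc it is archimedean, hence abelian, so every $M \in \Max(G)$ satisfies $G/M \hookrightarrow \R$ and is in particular prime. The whole argument reduces to one claim: \emph{if $a,c \in G^+$ and $c \notin G(a)$, then there is $M \in \Max(G)$ with $a \in M$ and $c \notin M$}. Passing to absolute values reduces the general case to positive $a,c$, the case $a = 0$ is covered by emc ($G(0) = \{0\} = \bigcap \Max(G)$), and the case $G(a) = G$ is vacuous (the relevant family is empty, with intersection $G$ by the usual convention for the top of $\fC(G)$).

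To prove the claim I would first use property $\M$ to manufacture an element disjoint from $a$ but not from $c$. By Theorem \ref{Main}(4), $G \in \M$ gives $G(a) = a^{\pperp}$, so $c \notin a^{\pperp}$. Unwinding the definition of the double polar, $c \in a^{\pperp} = (a^{\perp})^{\perp}$ would mean $c \wedge |s| = 0$ for every $s \in a^{\perp}$; since this fails, there is $s \in a^{\perp}$ with $c \wedge |s| > 0$. As $a^{\perp}$ is a convex $\ell$-subgroup, I may take $k = |s| \geq 0$, so $k \wedge a = 0$ and $c \wedge k > 0$. Setting $k_0 = c \wedge k$, monotonicity of $\wedge$ yields $0 < k_0 \leq c$ and $k_0 \wedge a = 0$.

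Now I would invoke emc: since $\bigcap \Max(G) = \{0\}$ and $k_0 > 0$, there is $M \in \Max(G)$ with $k_0 \notin M$. Convexity of $M$ together with $0 \leq k_0 \leq c$ forces $c \notin M$, while primeness of $M$ applied to $a \wedge k_0 = 0 \in M$ (with $k_0 \notin M$) forces $a \in M$. Hence $G(a) \subseteq M$ and $c \notin M$, which proves the claim; intersecting over all such $M$ gives $G(a) = \bigcap\{M \in \Max(G) : G(a) \subseteq M\}$, so $G \in \Y$. The argument is short, and the only genuinely delicate point is the extraction of the disjoint witness $k$ from the hypothesis $c \notin a^{\pperp}$ — I expect that to be the main thing to get right, as everything after it (the pull-back through emc and the convexity/primeness bookkeeping) is routine. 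One could instead route the proof through Theorem \ref{Yosida}, e.g. by checking condition (6) that $G/G(a)$ has emc, but the direct verification above seems the most economical.
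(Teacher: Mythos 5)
Your proof is correct, and the details all check out: the reduction to positive $a,c$, the empty-intersection convention when $G(a)=G$, the extraction of the witness $k_0 = c \wedge |s|$ from $c \notin a^{\pperp} = (a^{\perp})^{\perp}$ (legitimate, since $a^{\perp}$ is an $\ell$-subgroup and hence closed under absolute value), and the final use of emc together with convexity and primeness of $M$. It does, however, take a more self-contained route than the paper. The paper's proof is two lines: it sets $\cP = \Max(G)$, cites the standing containment $I(\sZ_{\cP}(a)) \subseteq a^{\pperp}$ from Notations \ref{zsetnotation} and \ref{inducedemb} (which is proved coordinatewise in the induced embedding $G \to \R^{\cP}$), uses Theorem \ref{Main}(4) to collapse $a^{\pperp}$ onto $G(a)$, and then invokes the equivalence of conditions (3) and (1) in Theorem \ref{Yosida}. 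You instead verify the definition of $\Y$ directly, and in doing so you re-prove, by a disjointness-and-primeness argument, exactly the contrapositive of that standing containment for $\cM = \Max(G)$: if $c \notin a^{\pperp}$, then some maximal convex $\ell$-subgroup contains $a$ but not $c$. What your route buys is independence from Theorem \ref{Yosida} and the $I(\sZ_{\cM}(\cdot))$ formalism, so the proposition becomes an elementary consequence of Theorem \ref{Main}(4) plus emc; what the paper's route buys is brevity and the structural point that $\M$ enters only through $a^{\pperp} = G(a)$, the rest being the general fact $G(a) \subseteq I(\sZ_{\cM}(a)) \subseteq a^{\pperp}$, valid for any $\cM \subseteq \Max(G)$ with trivial intersection. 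One cosmetic remark: your justification of primeness of $M$ via archimedean-hence-abelian and $G/M \hookrightarrow \R$ works, but is more than is needed — a maximal convex $\ell$-subgroup is a value of each element outside it, and values are prime (\cite[Corollary 10.4]{darnel}, as recalled in Section 2), so your separation claim holds verbatim without that detour.
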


\begin{proof}
Suppose $G$ has emc and $G \in \M$. Then $L \coloneqq \mathbf{C}(G)$ is disjunctive and $\bigcap \Max(L) = \{ 0 \}$. So $L$ is ideally conjunctive by \ref{notop}, and $G \in \Y$ by \ref{Yosida}.
\end{proof}

%The following example
% which was inspired by an example attributed to Kaplansky by Birkhoff \cite[p. 351]{B67}, 
%shows that $\M \cap \Arch \nsubseteq \Y$. We thank the anonymous reviewer for helping us to improve the presentation of this example.

If $G \in \M$ and $G \notin \Arch$, then $G \notin \Y$, so $\M \nsubseteq \Y$. The following example shows that even $\M \cap \Arch \nsubseteq \Y$.

\begin{example}\label{MnotY}
We construct $G \in \M \cap \Arch$ with $G \notin \Y$. 

Let $K$ be the Cantor set, viewed as a subset of the unit interval $[0,1] \subseteq \R$ as usual, let $K^* = K \cup \{ -1 \}$, and let $\overline{\Z} = \mathbb{Z} \cup \{ \pm \infty \}$ have the usual order topology.  Our intended example $G$ will be an $\ell$-group contained in the subset of the Cartesian  product $C(K,\overline{\Z})^{K^*}$ consisting of those $f = (f_x)_{x \in K^*}$ such that $f_x = 0$ for all but finitely many $x \in K^*$. What makes this somewhat awkward is that $C(K,\overline{\Z})$ is not a group, so neither is $C(K,\overline{\Z})^{K^*}$, but of course various subsets are groups. In particular, note that $C(K,\Z)$ is an $\ell$-group in $C(K,\overline{\Z})$ and that functions in $C(K,\Z)$ take only finitely many values (hence the pre-images of those values are disjoint clopen sets in $K$).

First, we describe a particular collection $\{ a^k \}_{k \in K}$ of unbounded functions in $C(K, \overline{\Z})$. For $k \in K$, choose a disjoint collection $\{ U_{k,n} \}_{n \in \N} \subseteq \Clop(K)$ such that $k \notin U_{k,n} \neq \emptyset$ for every $n \in \N$ and $K = \{ k \} \cup \bigcup_{n \in \N} U_{k,n}$; this is possible because each singleton set in $K$ is a zero-set and $K$ is compact and zero-dimensional and every cozero-set of $K$ is Lindel\"{o}f.
Now let $a^k \colon K \to \overline{\Z}$ be given by
\[
a^k(y) = \left \{
\begin{array}{ll}
+\infty & \mbox{ if $y=k$,} \\
n & \mbox{ if $y \in U_{k,n}$.}
\end{array}
\right.
\]
Note that $a^k  \in C(K,\overline{\Z})$, and that  if $k'\neq k$ in $K$, then there is $U \in \Clop(K)$ such that $k' \in U$, $k \notin U$, and $a^k$ is constant on $U$. Moreover, if $b \in C(K,\Z)$, and if $\{k_i \}_{i=1}^n$ is a finite subset of $K$, then $b + \sum_{i=1}^n m_ia^{k_i}$ is a well-defined element of $C(K,\overline{\Z})$ for any $\{ m_i \}_{i=1}^n \subseteq \Z$. Indeed, for each $i \in \{1,\dots, n \}$, there is $U_i \in \Clop(K)$ such that $k_i \in U_i$, $k_j \notin U_i$ when $j \neq i$, and $b$ and each $a^{k_j}$ ($j \neq i$) are constant on $U_i$. So there is $c_i \in C(K,\Z)$ such that $c_i$ is constant on $U_i$ and vanishes on $K-U_i$, and $b+\sum_{i=1}^n m_ia^{k_i}$ agrees with $c_i+m_ia^{k_i}$ on $U_i$; the latter is clearly a well-defined element of $C(K,\overline{\Z})$. By shrinking $U_i$ if necessary, one may assume that $c_i+m_ia^{k_i}$ does not change sign on $U_i$. It follows that $b + \sum_{i=1}^n m_ia^{k_i}$ may be written as a sum of disjoint terms, each of which is either bounded or comparable with 0. These observations can help one to check that our intended example $G$, to be described shortly, is an $\ell$-group. %Using these ideas, it can be shown that the set of all functions of the form $b + \sum_{i=1}^n m_ia^{k_i}$ is an $\ell$-group in $C(K,\overline{\Z})$.
%, and that if $\{ k_i \}_{i=1}^n$ is a finite subset of $K$, and if $\{ m_i \}_{i=1}^n \subseteq \Z$, then $\sum_{i=1}^n m_ia^{k_i}$ is again in $C(K,\overline{\Z})$. Thus $\{ a^k \}_{k \in K }$ generates a group in $C(K,\overline{\Z})$. Furthermore, with a bit more effort, it can be shown that the collection of all functions of the form $b+\sum_{i=1}^n  m_ia^{k_i}$ is an $\ell$-group in $C(K,\overline{\Z})$, where $b \in C(K,\Z)$, $\{ k_i \}_{i =1}^n$ is a finite subset of $K$, and $\{ m_i \}_{i =1}^n \subseteq \Z$. One approach would be to start by showing that each such sum can be rewritten in the form $b_0 + \sum_{i=1}^n (b_i + m_ia^{k})$, where $\{ b_i \}_{i=0}^n \subseteq C(K,\Z)$ and $b_0, b_1+m_1a^{k_1},\dots, b_n+m_na^{k_n}$ are pairwise disjoint with $b_i +m_ia^{k_i} \geq 0$ or $b_i + m_ia^{k_i} \leq 0$ for each $i \in \{ 1, \dots, n \}$.

%Note that if $U$ is any clopen  set in $K$ with $k \in U$, then there is a finite clopen partition $\{ V_i \}_{i \in I}$ of $K-U$ such that $g^k$ is constant on each $V_i$. Indeed, if $U$ is a clopen set with $k \in U$, then $\{ U \} \cup \{ U_{k,n} - U \}_{n \in \N}$ is an open cover of $K$, so there is a finite subcover $\{ U \} \cup \{ U_{k,n_i} - U \}_{i \in I}$, and then $\{ U_{k,n_i} - U \}_{i \in I}$ is the desired clopen partition of $K-U$ (recall that $\{ U_{k,n} \}_{n \in \N}$ is a disjoint collection).

Next, we define two special collections, namely $\{ f^k \}_{k \in K}$ and $B$, that will generate our intended example $G$.
For each $k \in K$, let $f^k = (f^k_x)_{x \in K^*}$ be the element of $C(K,\overline{\Z})^{K^*}$ given by
 \[
f^k_x = \left \{
\begin{array}{ll}
a^k & \mbox{ if  $x = -1$ or $x = k$;} \\
0 & \mbox{ otherwise.} 
\end{array}
\right.
\]
%Note that $\{ f^{k} \}_{k \in K}$ generates a group in $C(K,\overline{Z})^{K^*}$.
Let $B$ consist of those $b = (b_x)_{x \in K^*}$ in  $C(K,\overline{\Z})^{K^*}$ such that
\begin{itemize}
\item  $b_x = 0$ for all but finitely many $x \in K^*$;
\item $b_x \in C(K,\Z)$ for every $x \in K^*$;
\item $b_k(k) = 0$ for every $k \in K$.
\end{itemize}
In particular, note that for every $k \in K$, if $b \in B$, then $b_k$ vanishes on some  $U \in \Clop(K)$ with $k \in U$ because $b_k \in \C(K,\Z)$ and $b_k(k)=0$ by hypothesis. Observe  too that $B$ is an $\ell$-subgroup of the direct sum $\bigoplus_{x \in K^*} C(K,\Z)$.

Finally, let $g \in G$ mean that $g = b + \sum_{i=1}^n m_if^{k_i}$, where $b \in B$, $\{ k_i \}_{i=1}^n$ is a finite subset of $K$, and $\{ m_i \}_{i=1}^n \subseteq \Z$. 
Using the observations about the functions $a^k$ made above, it can be shown that $G$ is an $\ell$-group in $C(K,\overline{\Z})^{K^*}$. Note that if $g \in G$, and if $g_x(k) \neq \pm \infty$ for some $x \in K^*$ and $k \in K$, then $g_x$ is finite and constant on some clopen set of $K$ containing $k$. Moreover, note that if $g_{-1}(k) \neq \pm \infty$ for some $k \in K$, then $g_{k}$ vanishes on some clopen set in $K$ that contains $k$.

To see that  $G \in \Arch$, suppose that $0 < g \leq g'$ in $G$. Since $g > 0$, there are $x \in K^*$ and $k \in K$ such that $g_x(k) > 0$. Since $g$ and $g'$ can take the value $+\infty$ at only finitely many points, it follows that there is $y \in K$ such that $0 < g_x(y) \leq g'_x(y) < +\infty$. Thus, there is $n \in \N$ such that $ng \nleq g'$. Hence $G \in \Arch$.

Let $B_{-1}$ consist of those $b = (b_x)_{x \in K^*} \in B$ such that $b_k = 0$ for all  $k \in K$. To show that $G \notin \Y$, it suffices to show $B_{-1} \subseteq \bigcap \Max(G)$. Take $M \in \Max(G)$. Towards a contradiction, assume there is $k_0 \in K$ such that $f_{-1}(k_0) = 0$ for every $f \in M$. Then the convex $\ell$-subgroup $M'$ of $G$ generated by $M \cup \{ b \}$, where $b$ is any element of $B_{-1}$  with $b_{-1}(k_0) \neq 0$, does not contain the function $f^{k_0}$ (because $f^{k_0}_{-1}(k_0) = +\infty$), which contradicts the maximality of $M$. So for each $k \in K$, there are $0 < e^k \in M$ and $U_k \in \Clop(K)$ such that $k \in U$ and $e^k_{-1}$ does not vanish on $U_k$. Since $K$ is compact, there is a finite set $\{ k_i \}_{i \in I}$ such that $\{ U_{k_i} \}_{i \in I}$ covers $K$. Since every element $0 \leq b \in  B_{-1}$ lies below some integer multiple of $\bigvee_{i \in I} e^{k_i} \in M$, it follows that $B_{-1} \subseteq M$. Hence $B_{-1} \subseteq \bigcap \Max(G)$ as claimed.

It remains to check that $G \in \M$.  Take $0 < g,h \in G$ and suppose $g \notin G(h)$. If we can show that $g \notin h^{\perp \perp}$, then $h^{\perp \perp} = G(h)$ and $G \in \M$ by \ref{Main}. Since $g \notin G(h)$, it can be shown that there is $k \in K$ such that $g_{-1}(k) = +\infty > h_{-1}(k)\geq 0$. It follows that there is a clopen set $U$ in $K$ such that $k \in U$ and $h_k(u) = 0 < g_k(u)$ for every $u \in U$. Since $K$ has no isolated points, one may choose a nonempty clopen set $V \subseteq U$ such that $k \notin V$. Let $b \in B$ be such that $b_k$ is the characteristic function of $V$ and $b_x=0$ for $k \neq x \in K^*$. Then $b \in G$ and $b\wedge h=0 < b \wedge g$. Hence $g \notin h^{\pperp}$. 
\end{example}

We turn now to {\bf W}-objects. Here is our main result concerning this class.

\begin{theorem}\label{main}
Let $(G,u)\in {\bf W}$. The following statements are equivalent.
\begin{enumerate}[label={\rm \arabic*.}, nolistsep]
\item[(1)] $G \in \M$. 
\item[(2)] $(G,u)\in {\bf W}^*$ and the Yosida embedding $G \leq C(YG)$ has the property that for any $a,b \geq 0$, if $\coz(b) \subseteq \Cl \coz(a)$, then $b\in G(a)$.
\item[(3)] $G \in \Y$ and $\Max(G)  \subseteq \Spec_d(G)$. 
\item[(4)] $G \in \Y$ and every weak order unit of $G$ is a strong order unit. 
\end{enumerate}
\end{theorem}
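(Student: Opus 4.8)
The plan is to verify the four conditions are equivalent by proving each of (2), (3), (4) equivalent to (1) separately, leaning on the characterizations already established for $\M$ (Theorem \ref{Main}) and for $\Y$ (Theorem \ref{Yosida}), together with Proposition \ref{wu->su}, Proposition \ref{MartYos}, and the polar formula of Lemma \ref{polarYG}. The guiding observation is that condition (2) is nothing more than a concrete restatement of $\M$ inside the Yosida representation, so the real content lies in relating $\M$ to the Yosida property $\Y$ together with a unit condition.

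For (1) $\Leftrightarrow$ (2): if $G \in \M$ then by Proposition \ref{wu->su}(2) the distinguished weak unit $u$ is strong, so $(G,u) \in \mathbf{W}^*$ and $G \leq C(YG)$. By Lemma \ref{polarYG}, the hypothesis ``$\coz(b) \subseteq \Cl\coz(a)$'' is exactly ``$b \in a^{\pperp}$'', so the function-theoretic condition in (2) reads $a^{\pperp} \subseteq G(a)$ for all $a \geq 0$; since $G(a) \subseteq a^{\pperp}$ always and $G(a) = G(|a|)$, $a^{\pperp} = |a|^{\pperp}$, this is precisely condition (4) of Theorem \ref{Main}. Thus (1) gives (2), and conversely the condition in (2) forces $G(a) = a^{\pperp}$ for all $a$, which is $\M$.

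For (1) $\Leftrightarrow$ (3): assume $G \in \M$. Again $u$ is strong by \ref{wu->su}, so $G$ is archimedean with a strong unit and therefore has emc (via the Yosida embedding $G \leq C(YG) \subseteq \R^{YG}$); Proposition \ref{MartYos} then yields $G \in \Y$. Since $\Spec_d(G) = \Spec(G)$ by Theorem \ref{Main}(2) and $\Max(G) \subseteq \Spec(G)$ (archimedean $\ell$-groups are abelian, and maximal convex $\ell$-subgroups of abelian $\ell$-groups are prime), we get $\Max(G) \subseteq \Spec_d(G)$. Conversely, if $G \in \Y$ and $\Max(G) \subseteq \Spec_d(G)$, then for each $a$ the subgroup $G(a)$ is an intersection of maximal convex $\ell$-subgroups, each of which is a $d$-subgroup, and an intersection of $d$-subgroups is a $d$-subgroup; hence $G(a) \in \fC_d(G)$, which is condition (3) of Theorem \ref{Main}, so $G \in \M$.

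For (1) $\Leftrightarrow$ (4): one direction is immediate, since if $G \in \M$ then $G \in \Y$ by (1) $\Rightarrow$ (3) and every weak unit is strong by \ref{wu->su}. The substantive implication is (4) $\Rightarrow$ (1), and this is where I expect the main difficulty. Suppose $G \in \Y$, every weak unit is strong, but $G \notin \M$; by Theorem \ref{Main}(4) pick $0 \leq a$ and $0 < b \in a^{\pperp} \setminus G(a)$. Since $G \in \Y$, $G(a)$ is an intersection of maximal convex $\ell$-subgroups, so there is $M \in \Max(G)$ with $a \in M$ and $b \notin M$. Then $a^{\pperp} \not\subseteq M$, so $M$ is not a $d$-subgroup; as $M$ is maximal and $d(M)$ is a convex $\ell$-subgroup properly containing $M$, necessarily $d(M) = G$. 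The key step is now to exploit the directed description $d(M) = \bigvee_{m \in M^+} m^{\pperp}$: because this join is directed (as recalled in Section 3), $u \in d(M) = G$ lands in a single $m^{\pperp}$ with $m \in M^+$, and $u \in m^{\pperp}$ together with $u^{\perp} = \{0\}$ forces $m^{\perp} = \{0\}$, i.e. $m$ is a weak unit. But $m \in M$ gives $G(m) \subseteq M \subsetneq G$, so $m$ is not a strong unit, contradicting (4). The crux, and the step I would spend the most care on, is recognizing that the failure of a maximal $M$ to be a $d$-subgroup manufactures, via $d(M) = G$ and directedness, a weak unit living inside $M$, hence a non-strong weak unit.
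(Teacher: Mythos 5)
Your proposal is correct, and its crucial implication takes a genuinely different route from the paper's. The paper proves the cycle (1)$\Leftrightarrow$(2), (2)$\Rightarrow$(3), (3)$\Rightarrow$(4), (4)$\Rightarrow$(2), and its hard step (4)$\Rightarrow$(2) works inside the Yosida representation: from $b \in a^{\pperp} \ssm G(a)$ it extracts, via $\Y$, an $M \in \Max(G) = YG$ lying in $\Cl \coz(a) \ssm \coz(a) \subseteq Z(a)$, then invokes the separation property of the Yosida embedding to produce $0 \leq c \in G$ with $M \in Z(c)$ and $Z(b) \subseteq \coz(c)$, so that $c+a$ has dense cozero-set (hence is a weak unit) yet lies in $M$ (hence is not strong), contradicting (4). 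Your (4)$\Rightarrow$(1) is representation-free: the same $\Y$-separation gives $M \in \Max(G)$ with $a \in M$, $b \notin M$, so $M$ fails to be a $d$-subgroup; maximality then forces $d(M) = G$, and since $d(M) = \bigvee_{m \in M^+} m^{\pperp}$ is a directed join (as the paper notes in Section 3) while $G(u)$ is a compact element of $\fC(G)$, you get $u \in m^{\pperp}$ for a single $m \in M^+$, whence $m^{\perp} \subseteq u^{\perp} = \{0\}$ and $m$ is a weak unit trapped inside $M$, hence not strong --- your one delicate step, $u$ landing in a single $m^{\pperp}$, is exactly justified by the compactness of principal convex $\ell$-subgroups cited in the Preliminaries. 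Both arguments manufacture a non-strong weak unit, but the paper builds one externally from the topology of $YG$ (and, by routing through (2), obtains the function-theoretic characterization as a byproduct), whereas yours finds one already in $M$ using only the $d$-closure operator, so it is more elementary in that it never touches the Yosida space or its separation property. Your remaining implications also reorganize the proof as a hub at (1) --- in particular you get (1)$\Rightarrow$(3) from emc plus Proposition \ref{MartYos} rather than from (2) via Lemma \ref{polarYG} --- but these directions are routine in both treatments and match the paper in substance.
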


\begin{proof}
If $(G,u) \in \bf{W}^*$, then $F \coloneqq \mathbf{C}(G)$ is a bounded distributive lattice, and $\bigcap \Max(L) = \{ 0 \}$ follows from \ref{YRT}. So in this case (1), (3), and (4) are equivalent by \ref{disj-conj}. Now each of (1)--(4) implies that $(G,u) \in \bf{W}^*$. Indeed, for (2) and (4) this is immediate. If $G \in \M$, then $G(u) = u^{\pperp} = G$, so $u$ is strong. If (3) holds, then again $G(u) = u^{\pperp} = G$, this time because $G(u)$ is an intersection of maximal convex $\ell$-subgroups that are $d$-elements in $\fC(G)$ (any meet of $d$-elements if a $d$-element).

The condition on the Yosida embedding in (2) is just a rephrasing of $a^{\pperp} \subseteq G(a)$ using \ref{polarYG}, so (2) is clearly equivalent to (1). 
\end{proof}

\begin{remark}
The example $G$ in \ref{MnotY} may be used to show that $\Y$ is not a radical class. Indeed, if $G(a)$ is any principal convex $\ell$-subgroup of $G$, then  $G(a) \in \M$ by \ref{Main} because $G \in \M$, and therefore $G(a) \in \Y$ by \ref{main} because $G(a) \in \bf{W}$. However, $G$ is the join of all its principal convex $\ell$-subgroups and $G \notin \Y$. 
\end{remark}

\begin{remark}
If $G \in \bf{W}^*$, then one has $G(a) \subseteq I(Z(a)) \subseteq a^{\pperp}$ for each $a \in G$ (here $Z(a)$ is with respect to the Yosida embedding). In that case, $G \in \M$ says $G(a) = a^{\pperp}$ and $G \in \Y$ says $G(a) = I(Z(a))$, i.e., $\M$ and $\Y$ may described in terms of coincidence of types of ideals (here ``ideal" is just short for ``convex $\ell$-subgroup" since the group is abelian). For further analysis of $\M$ and $\Y$ in $\bf{W}^*$ along these lines, including construction of more examples, see our companion paper \cite{bhmw}.
\end{remark}

%%%%%%%%%%%%%%%%%%%%%%%%%%%%%%%%%%%%%

\section{The $G+B$ Construction}\label{G+B}

In this section we modify a construction from the theory of commutative rings, known as ``the $A+B$ construction" (see \cite[Chapter VI]{huckaba}), to make it useful in the context of $\ell$-groups.

Let $G$ be an abelian $\ell$-group and let $P \subseteq \Spec(G)$ with $\bigcap P = \{0\}$. For each $(p,n) \in P \times \N$, let $G(p,n)$ be a copy of $G/p$, and let $G[P,\N] =\Pi \{ G(p,n) : (p,n) \in P \times \N \}$. For each $g \in G$, let $\hat{g} \in G[P,\N]$ be given by $\widehat{g}(p,n) = g + p$. Since $\bigcap P = \{ 0 \}$, we have, as in \ref{inducedemb}, the induced embedding $g \mapsto \widehat{g}$ of $G$ into $G[P,\N]$; we denote the image of $G$ by $\widehat{G}$. 

Next, for each $g \in G$ and each $(p,n) \in P \times \N$, we define $g_{(p,n)} \in G[P,\N]$ by
\[
g_{(p,n)}(q,m) = \left \{
\begin{array}{ll}
\hat{g}(p,n) & \mbox{ if } (q,m) = (p,n); \\
0 & \mbox{ otherwise.}
\end{array}
\right.
\]
Let $B$ be the $\ell$-subgroup of $G[P,\N]$ generated by $\{ g_{(p,n)} : g \in G, (p,n) \in P \times \N \}$; note that $B$ is simply the direct sum $\bigoplus \{ G/p : (p,n) \in P \times \N \}$. The $\ell$-subgroup of $G[P,\N]$ generated by $\widehat{G}$ and $B$ is denoted by $\widehat{G}+B$. The $\ell$-group $\widehat{G}+B$ is what we call ``the $G+B$ construction".

\begin{observations}\label{obs}
\begin{itemize}
\item[(i)] Every element of $\widehat{G}+B$ can be written uniquely in the form $\widehat{g}+b$, with $\widehat{g} \in \widehat{G}$ and $b \in B$.
\item[(ii)] $B$ is an $\ell$-ideal in $\widehat{G} +B$ and $(\widehat{G}+B)/B \cong G$ (the latter follows from the Second Isomorphism Theorem \cite[Theorem 8.6(b)]{darnel}); the associated canonical surjection will be denoted by $\beta$.
\item[(iii)] It should be apparent that the properties of $\widehat{G}+B$ depend on the choice of $P$, even though the notation $\widehat{G}+B$ does not indicate that explicitly.
\end{itemize}
 \end{observations}

%\begin{remark}
%In the $G+B$ costruction, we start with a collection of primes $P$ with $\bigcap P = \{ 0 \}$. The main reason for this is to be able to embed $G$ into $G+B$. For future reference, note that a (nonempty) collection $P$ of primes has trivial intersection if and only if $P$ is dense in $\Spec(G)$ (with the hull-kernel topology).
%\end{remark}

Our first result characterizes $\Spec(\hat{G}+B)$ and $\Min(\widehat{G}+B)$. For each $(p,n) \in P \times \N$, let $m_{(p,n)} = \{ f \in \widehat{G}+B : f(p,n) = 0 \}$. Then $m_{(p,n)}$ is the kernel of the projection map from $\widehat{G}+B$ onto $G(p,n)$, so $m_{(p,n)} \in \Spec(\widehat{G}+B)$ because $G(p,n)$ is totally ordered.

%There are two types of prime subgroups in $\hat{G}+B$. The  first type are those that contain $B$. These correspond to the primes in $G$ since, 

%Here is our description of the prime subgroups and minimal primes of $G+B$.

\begin{proposition}\label{spec(G+B)}
\begin{enumerate}
\item $I \in \Spec(\widehat{G}+B)$ if and only if
\begin{itemize}
\item $I = \widehat{p}+B = \{ \widehat{g} +b : g \in p, b \in B \}$ for some $p \in \Spec(G)$ or
\item $I = \{ f \in  \widehat{G}+B : f(p,n) \in q/p \}$ for some $(p,n) \in P \times \N$ and $q \in \Spec(G)$ with $p \subseteq q$. 
\end{itemize}
\item $I \in \Min(\widehat{G}+B)$ if and only if 
\begin{itemize}
\item $I = \widehat{p} + B =\{ \widehat{g}+b : g \in p, b \in B \}$ for some $p \in \Min(G)$ or
\item $I = m_{(p,n)}$ for some $(p,n) \in P \times \N$.
\end{itemize}
\end{enumerate}
\end{proposition}

\begin{proof}
(1). By \ref{obs}(ii) and \cite[Proposition 9.11]{darnel}, the map $\beta$ induces a bijection between the prime ideals of $\widehat{G}+B$ containing $B$ and the prime ideals of $G$. Such primes have the form $\widehat{p}+B = \{ \widehat{g}+b : g \in p, b\in B \}$ for $p \in \Spec(G)$.

Suppose $(p,n) \in P \times \N$ and $q \in \Spec(G)$ with $p \subseteq q$. Since the image $q'$ of $q$ under the canonical surjection from $G$ to $G/p$ is a prime subgroup of $G/p$, one sees that  $\{ f \in  \widehat{G}+B : f(p,n) \in q/p \}$, which contains $m_{(p,n)}$, is also a prime subgroup because it is  the kernel of the composite map $\widehat{G} +B \to G(p,n) \to G(p,n)/q'$. It is clear that every prime subgroup of $\widehat{G}+B$ that contains $m_{(p,n)}$ is of this form.

Finally, suppose $I \in \Spec(\widehat{G}+B)$. We consider two cases: either $m_{(p,n)} \subseteq I$ for some $(p,n) \in P \times \N$ or not. In the first case, see the previous paragraph. For the second case, suppose that $m_{(p,n)} \nsubseteq I$ for every $(p,n) \in P \times \N$. Then for each $(p,n) \in P\times \N$ let $h_{(p,n)}\in m_{(p,n)} \ssm I$. For any $g\in G$, $h_{(p,n)}\wedge g_{(p,n)}=0$ and so since $I$ is prime it follows that $g_{(p,n)} \in I$. Consequently, $B\subseteq I$.

(2). Sufficiency is clear. To establish necessity, take $I \in \Min(\widehat{G}+B)$. There are two cases. If $I = \widehat{p}+B$ for $p \in \Spec(G)$, then if $q \in \Spec(G)$ has $q \subseteq q$, one sees that $\hat{q} +B \in \Spec(\widehat{G}+B)$ and $q+p \subseteq I$, so $q+p = I$. It follows that $q = p$ and $p \in \Min(G)$. Now suppose $I = \{ f \in \widehat{G}+B : f(p,n) \in q/p \}$ for some $(p,n) \in P\times \N$ and $p \subseteq q \in \Spec(G)$. Then $m_{(p,n)} \subseteq I$ and $m_{(p,n)} \in \Spec(\widehat{G}+B)$, so $I = m_{(p,n)}$.
\end{proof}

Observe that if $g\in G\ssm \bigcup P$, then $g(p,n)\neq 0$ for all $(p,n) \in P\times \N$ and hence $\widehat{g}$ is a weak order unit of $\widehat{G}+B$ and $g$ is a weak order unit of $G$ (recall that $\bigcap P=\{0\}$). This leads to a characterization of the principal polars of $\widehat{G}+B$. Recalling \ref{zsetnotation}, we write $\sC(f)=\{(p,n)\in P\times \N: f(p,n)\neq 0\}$ and $\sZ(f) =\cI \ssm \sC(f)$ for $f \in \widehat{G}+B$. Observe that the weak order units of $\widehat{G}+B$ are precisely those $f\in \widehat{G}+B$ such that $\sZ(f)=\emptyset$. Moreover, for $g\in G$, $\widehat{g}$ is a weak order unit of $\widehat{G}+B$ if and only if $g\in G\ssm \bigcup P$.

\begin{proposition}\label{polar2}
\begin{itemize}
\item[(1)] If $f,h\in \widehat{G}+B$, then $|f|\wedge |h|=0$ if and only if $\sC(f)\cap \sC(h)=\emptyset$.
\item[(2)] If $f\in \widehat{G}+B$, then $f^{\pperp} = \{h\in \widehat{G}+B: \sC(h)\subseteq \sC(f)\}$.
\end{itemize}
\end{proposition}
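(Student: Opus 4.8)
The plan is to exploit two features of the construction: the operations on $G+B$ are computed coordinate-wise (inherited from $\prod_{i \in \cI} G_i$), and each factor $G_i = G/P_i$ is totally ordered, since $P_i \in \Spec(G)$ forces $G/P_i$ to be totally ordered (as recalled in the Preliminaries). Both parts reduce to testing a condition coordinate by coordinate.

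For part (1), since $\vee$ and $\wedge$ are coordinate-wise, one has $|f|(i) = |f(i)|$ and $(|f| \wedge |h|)(i) = |f(i)| \wedge |h(i)|$, the latter evaluated in $G_i$. In a totally-ordered group, $|a| \wedge |b| = 0$ holds if and only if $a = 0$ or $b = 0$; applying this in $G_i$ at each coordinate gives that $|f| \wedge |h| = 0$ if and only if, for every $i \in \cI$, at least one of $f(i), h(i)$ vanishes, which is exactly $\sC(f) \cap \sC(h) = \emptyset$. This is entirely formal.

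For part (2), I would first use (1) to rewrite the polar: $f^{\perp} = \{ k \in G+B : \sC(k) \cap \sC(f) = \emptyset \} = \{ k : \sC(k) \subseteq \sZ(f) \}$. Applying (1) once more, $h \in f^{\pperp}$ if and only if $\sC(h) \cap \sC(k) = \emptyset$ for every $k$ with $\sC(k) \subseteq \sZ(f)$. The inclusion $\supseteq$ is then immediate: if $\sC(h) \subseteq \sC(f)$, then for any such $k$ we get $\sC(h) \cap \sC(k) \subseteq \sC(f) \cap \sZ(f) = \emptyset$, so $h \in f^{\pperp}$. For the reverse inclusion I would argue by contraposition: if $\sC(h) \nsubseteq \sC(f)$, choose $i_0 \in \sC(h) \cap \sZ(f)$ and produce a witness $k \in B \subseteq G+B$ supported only at $i_0$, with $k(i_0) = h(i_0) \neq 0$ and $k(j) = 0$ for $j \neq i_0$. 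Such a $k$ exists because $B = \bigoplus_{i \in \cI} G_i$ is a direct sum (and $\pi_{P_{i_0}}$ is surjective, so every value in $G_{i_0}$ is realized). Then $\sC(k) = \{ i_0 \} \subseteq \sZ(f)$, so $k \in f^{\perp}$, while $\sC(h) \cap \sC(k) = \{ i_0 \} \neq \emptyset$, whence $h \notin f^{\pperp}$.

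The only step that uses more than the coordinate-wise description is the construction of the single-coordinate witness $k$ in the reverse inclusion of (2); this is where the direct-sum structure of $B$ is essential, and it is the step I would expect to be the crux of the argument. Everything else follows mechanically from part (1) together with the total order of the quotients $G_i$.
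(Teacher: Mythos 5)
Your proposal is correct and follows essentially the same route as the paper: part (1) by the coordinate-wise lattice operations together with the total order of each quotient $G_i = G/P_i$, and part (2) by translating $f^{\perp}$ into the cozero-set condition and, for the reverse inclusion, producing a single-coordinate witness in $B$ supported at some $i_0 \in \sC(h) \ssm \sC(f)$ (the paper uses $e_{i_0}$ for a chosen $e \in G \ssm P_{i_0}$, where you take $k(i_0) = h(i_0)$; the two witnesses play identical roles). No gaps.
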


\begin{proof}
(1). Straightforward.

(2). Without loss of generality, we assume that $0\leq f$. Let $0\leq h\in \widehat{G}+B$ satisfy $\sC(h)\subseteq \sC(f)$. If $k\in \widehat{G}+B$ with $k\wedge f=0$, then $\sC(k) \cap \sC(f)=\emptyset$ and hence also $\sC(k)\cap \sC(h)=\emptyset$. Thus, $k\wedge h=0$ and so $h\in f^{\pperp}$.

For the reverse direction, suppose  $h\in \widehat{G}+B$ is such that $\sC(H) \nsubseteq \sC(f)$. Then there is some $(p,n) \in \sC(h)\ssm \sC(f)$. Choose $e\in G\ssm p$ and observe that $\sC(e_{(p,n)})\cap \sC(f)=\emptyset$, whence $e_{(p,n)} \wedge f=0$. But $\sC(e_{(p,n)})=\{(p,n) \}\subseteq \sC(h)$ which implies $e_{(p,n)} \wedge h\neq 0$, so $h \notin f^{\pperp}$.
\end{proof}

Next, we characterize $\Spec_d(\widehat{G}+B)$. Clearly, each prime of the form $m_{(p,n)}$ is in $\Spec_d(\widehat{G}+B)$. In fact, one has $m_{(p,n)} \in \Max_d(\widehat{G}+B)$. Indeed, if $I \in \Spec(\widehat{G}+B)$ and $m_{(p,n)} \subsetneq I \neq G$, then by \ref{spec(G+B)} we have $I = \{ f \in \widehat{G}+B : f(p,n) \in q/p \}$ for some $q \in \Spec(G)$ with $p \subsetneq q \subsetneq G$. Choose any $g\in q\ssm p$ and any $h\in G\ssm q$. Then we have $g_{(p,n)} \in I \ssm m_{(p,n)}$ and $g_{(p,n)}^{\pperp} \nsubseteq I$ since $h_{(p,n)} \in g_{(p,n)}^{\pperp}\ssm I$. Thus, $I$ is not a $d$-subgroup of $\widehat{G}+B$. It remains to determine which primes of the form $\hat{p}+B$, with $p \in \Spec(G)$, are in $\Spec_d(G+B)$.

\begin{definition}
For $g\in G$, set $V_P(g)=P\cap V(g)=\{p\in P:g\in p\}$. Let $H\in \fC(G)$. We say that $H$ is a {\it $P$-element} if $g \in H$ whenever $g \in G$, $h\in H$, and $V_P(h)\subseteq V_P(g)$.
\end{definition}

\begin{theorem}\label{Pelement}
Let $q \in \Spec(G)$. The following statements are equivalent:
\begin{enumerate}[label={\rm \arabic*.}, nolistsep]
\item[(1)] $\hat{q}+B \in \Spec_d(\widehat{G}+B)$.
\item[(2)] $q$ is a $P$-element.
\item[(3)] For every $g\in q$, $\bigcap V_P(g)\subseteq q$.
\item[(4)] $q$ belongs to the patch closure of $P$ in $\Spec(G)$.
\end{enumerate}
\end{theorem}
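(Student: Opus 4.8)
The plan is to establish the three biconditionals $(1)\Leftrightarrow(2)$, $(2)\Leftrightarrow(3)$, and $(3)\Leftrightarrow(4)$ separately; of these, only the first uses the internal structure of $G+B$, while the other two are manipulations inside $G$. The backbone of everything is a dictionary between the primes of $\cP$ containing an element of $G$ and the cozero-sets computed in $G+B$: for $g\in G$ (viewed in $G+B$) the set $\sC(g)$ is the ``cylinder'' $\{(P,n)\in\cI : g\notin P\}$, a union of whole fibres $\{P\}\times\N$. Consequently, for $g,h\in G$ one has $V_{\cP}(h)\subseteq V_{\cP}(g)$ if and only if $\sC(g)\subseteq\sC(h)$, which by Proposition \ref{polar2}(2) is in turn equivalent to $g\in h^{\pperp}$ inside $G+B$. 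I will also use the uniqueness of the decomposition $g+b$ to record that $(Q+B)\cap G=Q$.

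For $(1)\Leftrightarrow(2)$, recall first that $Q+B$ is automatically prime (by the lemma showing $P+B\in\Spec(G+B)$ for $P\in\Spec(G)$), so (1) merely asserts that $Q+B$ is a $d$-subgroup. For $(2)\Rightarrow(1)$, take $f=q+b\in Q+B$ (so $q\in Q$, $b\in B$) and $h=g'+b'\in f^{\pperp}$; the goal is $g'\in Q$. The key step is to prove $V_{\cP}(q)\subseteq V_{\cP}(g')$, equivalently $g'\notin P\Rightarrow q\notin P$ for $P\in\cP$. Fixing such a $P$, on the fibre $\{P\}\times\N$ all but finitely many coordinates of both $b$ and $b'$ vanish; for any such index $(P,n)$ one has $h(P,n)=\pi_P(g')\neq0$, so $(P,n)\in\sC(h)\subseteq\sC(f)$, forcing $f(P,n)=\pi_P(q)\neq0$, i.e. $q\notin P$. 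Then the $\cP$-element property applied to $q\in Q$ yields $g'\in Q$, whence $h\in Q+B$. For $(1)\Rightarrow(2)$, given $h\in Q$ and $g\in G$ with $V_{\cP}(h)\subseteq V_{\cP}(g)$, the dictionary gives $g\in h^{\pperp}\subseteq Q+B$ (the last inclusion because $Q+B$ is a $d$-subgroup and $h\in Q\subseteq Q+B$), and since $g\in G$ we conclude $g\in(Q+B)\cap G=Q$.

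The remaining equivalences are routine. For $(2)\Leftrightarrow(3)$: $(3)\Rightarrow(2)$ follows because $V_{\cP}(h)\subseteq V_{\cP}(g)$ places $g$ in every member of $V_{\cP}(h)$, hence in $\bigcap V_{\cP}(h)\subseteq Q$; conversely, given $q\in Q$ and $g\in\bigcap V_{\cP}(q)$ one checks $V_{\cP}(q)\subseteq V_{\cP}(g)$, so the $\cP$-element property gives $g\in Q$. For $(3)\Leftrightarrow(4)$ I would simply unwind the patch topology: a basic patch neighbourhood of $Q$ has the form $U(a)\cap V(b)$ with $a\notin Q$ and $b\in Q$, and it meets $\cP$ exactly when there is $P\in\cP$ with $a\notin P$ and $b\in P$; quantifying over all such $a,b\geq0$ recovers precisely the statement that $\bigcap V_{\cP}(q)\subseteq Q$ for every $q\in Q$. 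I expect the main obstacle to be the bookkeeping in $(2)\Rightarrow(1)$: one must carefully isolate the fibre $\{P\}\times\N$ and exploit that the finitely supported summands $b,b'$ perturb only finitely many coordinates there, so that the cozero containment $\sC(h)\subseteq\sC(f)$ survives passage from $G+B$ back down to the level of $\cP$ and $G$.
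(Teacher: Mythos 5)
Your proposal is correct and rests on exactly the same ingredients as the paper's proof: the cozero dictionary from Proposition \ref{polar2}(2) translating $V_{\cP}$-containments into polar membership in $G+B$, the unique decomposition giving $(Q+B)\cap G=Q$, and the finite-support argument on an infinite fibre $\{P\}\times\N$. The only difference is organizational --- the paper proves the cycle (1)$\Rightarrow$(2)$\Rightarrow$(3)$\Rightarrow$(4)$\Rightarrow$(1), with your fibre argument appearing in its (4)$\Rightarrow$(1), whereas you prove three biconditionals and transplant that same argument into a direct (2)$\Rightarrow$(1) --- so this counts as essentially the same approach.
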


\begin{proof}
(1) $\Rightarrow$ (2). Suppose $\widehat{q}+B \in \Spec_d(G+B)$. Let $g\in q$ and $h\in G$ satisfy $V_P(g)\subseteq V_P(h)$. Let $(p,n) \in \sC(g)$. This means that $g\notin p$, and so $h \notin p$. It follows that $\sC(\widehat{h})\subseteq \sC(\widehat{g})$. Consequently, $\widehat{h} \in \widehat{g} ^{\pperp}$ in $\widehat{G}+B$. So $\widehat{g} \in \widehat{q}+B$ since $\widehat{q}+B \in \Spec_d(G+B)$. Therefore, $g\in q$ and $q$ is a $P$-element.

(2) $\Rightarrow$ (3). Suppose that $q$ is a $P$-element and let $g\in q$. For any $h\in \bigcap V_P(g)$, it follows that $V_P(g)\subseteq V_\cP(h)$, whence $h\in q$.

(3) $\Rightarrow$ (4). Let $g,h\in G$ satisfy $q \in V(g)\cap U(h)$. Notice that $V_P(g)\nsubseteq V_P(h)$, for otherwise $h\in \bigcap V_P(g)$, which is contained in $q$ by hypothesis. It follows that there is some $p \in P$ satisfying $g\in p$ and $h\notin p$. Therefore, $p \in V(g)\cap U(h)$.

(4) $\Rightarrow$ (1). Suppose $q$ is in the patch closure of $P$ and let $f\in \widehat{q}+B$. Then $f=\widehat{g}+b$ with $g\in q$. Suppose $e=\widehat{h}+b' \in \widehat{G}+B$ and $e \in f^{\pperp}$. This means that $\sC(e)\subseteq \sC(f)$. Towards a contradiction, assume $h \notin q$. Then $q \in V(g) \cap U(h)$, so by hypothesis there is $p \in P$ such that $p \in V(g) \cap U(h)$. But then there are infinitely many $n \in \N$ such that $f(q,n) = 0 \neq e(q,n)$, which contradicts that $\sC(e) \subseteq \sC(f)$. Hence $h \in q$, $e \in \widehat{q}+B$, and $\widehat{q} +B \in \Spec_d(G)$.
\end{proof}

The following observation will allow us to extend \ref{Pelement}.

\begin{proposition}
Let $G$ be an abelian $\ell$-group and $\emptyset \neq P \subseteq \Spec(G)$ with $\bigcap P = \{ 0 \}$. Then each $q \in \Min(G)$ belongs to the patch closure of $P$.
\end{proposition}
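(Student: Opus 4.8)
The plan is to test $Q$ against the basic patch-open sets. Recall that $\{U(a) \cap V(b) : a,b \in G\}$ is a base for the patch topology on $\Spec(G)$, and that $Q \in U(a) \cap V(b)$ means precisely $a \notin Q$ and $b \in Q$. So to place $Q$ in the patch closure of $\cP$ it suffices to show: whenever $a \notin Q$ and $b \in Q$, there is some $P \in \cP$ with $a \notin P$ and $b \in P$. Since $Q$ is convex, $a \notin Q$ iff $|a| \notin Q$ and $b \in Q$ iff $|b| \in Q$, so without loss of generality I take $a, b \geq 0$.

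The next step is to exploit the minimality of $Q$ to manufacture a disjoint partner for $b$. Since $0 \leq b \in Q$ and $Q$ is a minimal prime subgroup, the standard characterization (\cite[Theorem 14.9]{darnel}) yields $b^{\perp} \nsubseteq Q$; picking $0 \leq c \in b^{\perp} \ssm Q$ (we may take $c \geq 0$ because $b^{\perp}$ and $Q$ are convex $\ell$-subgroups) gives $b \wedge c = 0$ with $c \notin Q$. Because $Q$ is prime and neither $a$ nor $c$ lies in $Q$, the element $d = a \wedge c$ satisfies $d \notin Q$; in particular $d \neq 0$. By construction $0 \leq d \leq a$ and $0 \leq d \leq c$, so $b \wedge d \leq b \wedge c = 0$, i.e. $b \wedge d = 0$.

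Finally I would invoke the hypothesis $\bigcap \cP = \{0\}$. As $d \neq 0$, there is $P \in \cP$ with $d \notin P$. Convexity of $P$ together with $0 \leq d \leq a$ then forces $a \notin P$ (otherwise $d$ would be caught between $0$ and $a$ in $P$). On the other hand, $b \wedge d = 0 \in P$ and $P$ is prime, so from $d \notin P$ we conclude $b \in P$. Thus $P \in U(a) \cap V(b)$, which is exactly what is needed.

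The argument is short, and I do not anticipate a genuine obstacle. The single point requiring care is the construction of the disjoint companion $c$ with $c \notin Q$: this is precisely where minimality of $Q$ is essential, since without $c \notin Q$ one could not form a nonzero $d = a \wedge c$ outside $Q$, and the passage from $\bigcap \cP = \{0\}$ to $b \in P$ would collapse. All remaining manipulations are routine consequences of convexity and primeness.
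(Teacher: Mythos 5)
Your proof is correct and follows essentially the same route as the paper's: both use the minimal-prime characterization ($b \in Q$ minimal implies $b^{\perp} \nsubseteq Q$, cited there as \cite[Theorem 1.2.11]{AF88}) to produce $c \notin Q$ disjoint from $b$, pass to the meet $d = a \wedge c \notin Q$ (the paper phrases this as ``WLOG $g \wedge h = 0$''), and then invoke $\bigcap \cP = \{0\}$ plus primeness of $P$ to land $P \in U(a) \cap V(b)$. Your explicit convexity step showing $d \notin P$ forces $a \notin P$ is exactly what justifies the paper's without-loss-of-generality reduction, so nothing is missing.
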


\begin{proof}
Let $q \in \Min(G)$ and take $0<g,h\in G$ for which $q \in U(g)\cap V(h)$, a patch basic open set. Then $h\in q$ means there is some $0<t\in G\ssm q$ such that $h\wedge t=0$ (\cite[Theorem 1.2.11]{AF88}). Replacing $g$ with $t\wedge g$ one may assume, without loss of generality, that $g\wedge h=0$. Since $0<g$ there is some $p \in P$ such that $g\notin p$. Disjointness implies that $h\in p$. Thus, $p\in U(g)\cap V(h)$. Hence, $q$ is in the patch closure of $P$.
\end{proof}

\begin{remark}
Since the $d$-subgroups of $G$ coincide with precisely those primes that belong to the patch closure of $\Min(G)$, it follows that as long as $\bigcap P= \{ 0 \}$, then $\widehat{p}+B \in \Spec_d(\widehat{G}+B)$ whenever $p \in \Spec_d(G)$.
\end{remark}

%%%%%%%%%%%%%%%%%%%%%%%%%%%%%%%%%%%%%%%
%%%%%%%%%%%%%%%%%%%%%%%%%%%%%%%%%%%%%%%

\section{$G+B$ and $\M$}\label{MfromY}

In this section we will establish conditions which imply $\widehat{G}+B \in \M$, and then use them to produce an example of an $\ell$-group in $\mathbf{W}^* \cap \M$ whose Yosida space is not zero-dimensional.

%Throughout this section we assume that $G$ is an abelian $\ell$-group and that $P \subseteq \Spec(G)$ with $\bigcap P = \{ 0 \}$.
%As in the previous sections $\Max(G)$ denotes the collection of maximal convex $\ell$-subgroups. %; {\it a priori} this set might be empty.

\begin{theorem}\label{main2}
Let $G$ be an abelian $\ell$-group and $P \subseteq \Spec(G)$ with $\bigcap P = \{ 0 \}$.
The following statements are equivalent.
\begin{enumerate}%[label={\rm \arabic*.}, nolistsep]
\item[(1)] $\widehat{G}+B \in \M$. 
\item[(2)] $P \subseteq \Max(G)$ and $P$ is patch dense in $\Spec(G)$.
\item[(3)] $G \in \Y$ and $P$ is a patch dense subset of $\Max(G)$.
%\item[(4)] $\widehat{G}+B \in \Y$. 
\end{enumerate}
\end{theorem}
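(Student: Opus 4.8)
The plan is to use (2) as a hub, establishing (1)$\Leftrightarrow$(2), (2)$\Leftrightarrow$(3) and (2)$\Leftrightarrow$(4), leaning throughout on the descriptions of $\Spec(G+B)$, $\Min(G+B)$ and $\Spec_d(G+B)$ from Section 5 together with the characterizations of $\M$ and $\Y$ in Theorems \ref{Main} and \ref{Yosida}.

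For (1)$\Leftrightarrow$(2) I would invoke Theorem \ref{Main}(2): $G+B\in\M$ exactly when $\Spec_d(G+B)=\Spec(G+B)$. The primes of $G+B$ are the $\widehat{Q_i}$ and the $P+B$. The computation preceding Theorem \ref{Pelement} shows a $\widehat{Q_i}$ with $P_i<Q<G$ is never a $d$-subgroup, so demanding that every $\widehat{Q_i}$ be a $d$-subgroup forces each member of $\cP$ to be maximal (when $P_i$ is maximal the only prime above $M_i$ is $M_i$ itself, which is a $d$-subgroup); thus the $\widehat{Q_i}$ clause is equivalent to $\cP\subseteq\Max(G)$. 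Simultaneously Theorem \ref{Pelement} says $P+B\in\Spec_d(G+B)$ iff $P$ lies in the patch closure of $\cP$, so the $P+B$ clause is equivalent to $\cP$ being patch dense in $\Spec(G)$. Together these give (1)$\Leftrightarrow$(2).

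For (2)$\Leftrightarrow$(3), note $\bigcap\cP=\{0\}$, and once $\cP\subseteq\Max(G)$ Theorem \ref{Yosida}(4) (applied with $\cM=\cP$) turns ``patch dense in $\Spec(G)$'' into ``$G\in\Y$''; moreover $G\in\Y$ makes $\Max(G)$ itself patch dense in $\Spec(G)$, and since patch density is transitive along $\cP\subseteq\Max(G)\subseteq\Spec(G)$ the two formulations coincide. For (2)$\Rightarrow$(4): when $\cP\subseteq\Max(G)$ each $G_i=G/P_i$ is a subgroup of $\R$, so $G+B\hookrightarrow\prod_i\R$ and hence $G+B$ has emc; as (2)$\Rightarrow$(1) gives $G+B\in\M$, Proposition \ref{MartYos} yields $G+B\in\Y$.

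The implication (4)$\Rightarrow$(2) is where the substance lies, and I would split off the two easy conclusions first. Since $G$ is an $\ell$-subgroup of $G+B$ and $\Y$ is inherited by $\ell$-subgroups (Theorem \ref{Yosida}(5)), $G+B\in\Y$ already gives $G\in\Y$; and since $\Y\subseteq\Arch$ while each coordinate copy of $G_i=G/P_i$ is an $\ell$-subgroup of $G+B$, every $G_i$ must be archimedean and totally ordered, hence a subgroup of $\R$, forcing $\cP\subseteq\Max(G)$. What remains, extracting patch density of $\cP$ from $G+B\in\Y$, is the main obstacle. The plan is to work with the minimal primes $M_i$: any family $\cM\subseteq\Max(G+B)$ witnessing $\Y$ with $\bigcap\cM=\{0\}$ must contain every $M_i$ (else a nonzero element of $B$ survives the intersection), and by Proposition \ref{polar2} one has $I(\sZ_{\{M_i\}}(f))=f^{\pperp}$, so showing the $M_i$ alone witness $\Y$ is exactly showing $G+B\in\M$. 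Patch density of $\cP$ would then follow from the correspondence, valid for the coordinate-indexed $M_i$, that $P+B$ lies in the patch closure of $\{M_i\}$ in $\Spec(G+B)$ iff $P$ lies in the patch closure of $\cP$ in $\Spec(G)$. The delicate point, and the step I expect to require the most care, is controlling the ``$G$-type'' maximals $P+B$ (all of which contain $B$): one must rule out that $\Y$ of $G+B$ is sustained by these alone while the $M_i$, and hence $\cP$, fail to be patch dense. This is the crux where $\Y$ and $\M$ must be reconciled for the $G+B$ construction.
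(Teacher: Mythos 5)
Your arguments for (1)$\Leftrightarrow$(2), (2)$\Leftrightarrow$(3), and your route to (4) via emc and Proposition~\ref{MartYos} are correct and essentially the paper's own: the paper proves $\cP\subseteq\Max(G)$ in (1)$\Rightarrow$(2) by exhibiting, for a nonmaximal $P\in\cP$, an element $b$ supported at the single coordinate $(P,1)$ with $(G+B)(b)$ strictly smaller than $b^{\pperp}$ (the same computation as the observation you cite preceding Theorem~\ref{Pelement}), and it handles patch density via Theorem~\ref{Pelement} and the transitivity of density exactly as you do. The genuine gap is (4)$\Rightarrow$(2): you obtain $G\in\Y$ and $\cP\subseteq\Max(G)$ just as the paper does, but for the patch density of $\cP$ you only outline a plan and then explicitly leave open its crux, namely ruling out that the Yosida property of $G+B$ is carried by the maximals $N+B$, $N\in\Max(G)$, rather than by the $M_i$. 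A proposal that stops at ``this is the crux'' is incomplete precisely there.

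You should know, however, that the obstacle you flagged is real and, as far as I can tell, fatal --- also to the paper's own argument. The paper's proof of (4)$\Rightarrow$(2) makes exactly the two-case analysis you anticipate ($M=M_i$ or $M=N+B$), and in the second case it obtains only a point of $\Max(G)$, not of $\cP$; its final sentence accordingly concludes only that $\Max(G)$ is patch dense in $\Spec(G)$, which merely restates $G\in\Y$ and does not yield (2). In fact the implication appears to be false. If $G\in\Y$ and $\cP\subseteq\Max(G)$ with $\bigcap\cP=\{0\}$, then for $f=h+c$ ($h\in G$, $c\in B$) the maximal convex $\ell$-subgroups of $G+B$ containing $f$ are exactly the $N+B$ with $h\in N\in\Max(G)$ and the $M_i$ with $i\in\sZ(f)$; their intersection is $[G(h)+B]\cap f^{\pperp}$ (interpreting empty intersections as the whole group, and using $\bigcap\{N\in\Max(G):h\in N\}=G(h)$, valid because $G\in\Y$, together with Proposition~\ref{polar2}(2)), and this intersection equals $(G+B)(f)$: for $w=h'+c'$ in it, the bound $|h'|\le m|h|$ controls every coordinate outside the finite supports of $c,c'$, while on the finitely many remaining coordinates either $f(i)=0$, and then $w(i)=0$ since $w\in f^{\pperp}$, or $G/P_i$ is a subgroup of $\R$ (as $P_i\in\Max(G)$) and some $n_i\in\N$ works; one maximum then bounds everything. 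Thus $G\in\Y$ and $\cP\subseteq\Max(G)$ already imply $G+B\in\Y$, with no patch density needed. Taking $G$ as in Example~\ref{MZex} and $\cP=\Max(G)\ssm\{M_\infty\}$ --- which the paper itself notes has trivial intersection but is not patch dense --- then gives $G+B\in\Y$ while (1)--(3) fail: for $q(t)=t\wedge 1$ and $g(t)=t$ one has $\sZ(q)=\sZ(g)=\{M_0\}\times\N$, so $g\in q^{\pperp}\ssm(G+B)(q)$ in $G+B$ and $G+B\notin\M$. So no completion of your plan (or the paper's) can succeed; condition (4) is strictly weaker than (1)--(3), and the correct fourth condition would be ``$G\in\Y$ and $\cP\subseteq\Max(G)$''.
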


\begin{proof}
(1) $\Rightarrow$ (2). Suppose $\widehat{G}+B \in \M$ and let $p \in P$. Towards a contradiction, assume $p \notin \Max(G)$. Then there is some $q \in \Spec(G)$ such that $p<q < G$. Choose $g\in q^+\ssm p$ and let $b = g_{(p,1)}$. Observe that the principal convex $\ell$-subgroup of $\widehat{G}+B$ generated by $b$ is strictly smaller than $b^{\pperp}$: if $h \in G \ssm q$, then $h_{(p,1)} \in b^{\pperp}$ but $h_{(p,1)} \nleq nb$ for any $n \in \N$, which contradicts that $\widehat{G}+B \in \M$. Hence $P \subseteq \Max(G)$.

Next, if $q \in \Spec(G)$, then $\widehat{q} + B \in \Spec(\widehat{G}+B) = \Spec_d(\widehat{G}+B)$ by \ref{spec(G+B)} and \ref{Main} (since $\widehat{G}+B \in \M$), so  $q$ is in the patch closure of $P$ by \ref{Pelement}. Consequently, $P$ is patch dense in $\Spec(G)$.

(2) $\Rightarrow$ (1). Assume that $P\subseteq \Max(G)$ and that $P$ is patch dense in $\Spec(G)$. By \ref{Pelement}, patch density yields that every prime of the form $\widehat{q}+B$ for $q\in \Spec(G)$ is a $d$-subgroup.  Since $P\subseteq \Max(G)$ we know that each $m_{(p,n)}$ is also a $d$-subgroup. So $\Spec(\widehat{G}+B) =\Spec_d(\widehat{G}+B)$ by \ref{spec(G+B)}, and have $G+B \in \M$ by \ref{Main}.

(2) $\Rightarrow$ (3). From (2) we gather that $\Max(G)$ is patch dense in $\Spec(G)$ and hence $G \in \Y$ by \ref{Yosida}. Since $P$ is patch dense in $\Spec(G)$ it is also patch dense in $\Max(G)$.

(3) $\Rightarrow$ (2). Since $G \in \Y$, it follows that $\Max(G)$ is patch dense in $\Spec(G)$. Since $P$ is patch dense in $\Max(G)$ it follows that $P$ is patch dense in $\Spec(G)$.

%\underline{(1) $\Rightarrow$ (4).} If $\widehat{G}+B \in \M$, then, by (2), $\widehat{G}+B$ has emc, so  $G \in \Y$ by \ref{MartYos}.

%\underline{(4) $\Rightarrow$ (2).} Suppose that $\widehat{G}+B \in \Y$. It follows from \ref{Yosida} that $\widehat{G}+B$ is archimedean, and hence $P\subseteq \Max(G)$, that $\Max(G)$ is patch dense in $\Spec(G)$, and that $G \in \Y$ (because $G$ is isomorphic to an $\ell$-subgroup of $\widehat{G}+B$). It remains to check that $P$ is patch dense in $\Spec(G)$. 

%Since we will be moving between basic patch open subsets of $\Spec(G)$ and of $\Spec(\widehat{G}+B)$, we shall use the operators $U(\cdot)$ and $V(\cdot)$ for $\Spec(G)$, and use $\cU(\cdot)$ and $\cV(\cdot)$ for  $\Spec(\widehat{G}+B)$. Let $0\leq a_1,a_2\in G$ and suppose that $U(a_1)\cap V(a_2)\neq \emptyset$. This means that there is some prime $q \in \Spec(G)$ such that $a_2\in q$ and $a_1\notin q$. It follows that $\widehat{q}+B\in \cU(a_1)\cap \cV(a_2)$, so $\cU(a_1)\cap \cV(a_2) \neq \emptyset$. So there is $M\in \Max(\widehat{G}+B)$ such that $M\in \cU(a_1)\cap \cV(a_2)$. This $M$ can have one of two forms: $M=m_{(p,n)}$ for some $(p,n) \in P \times \N$ or $M=\widehat{q}+B$ for some $q \in \Max(G)$.  In the latter case, it follows that $q \in U(a_1)\cap V(a_2)$. So suppose $M=m_{(p,n)}$. Now, since $a_1\notin m_{(p,n)}$, we see that $a_1\notin p$. Since $a_2\in m_{(p,n)}$, we obtain that $a_2\in p$. It follows that $p \in U(a_1)\cap V(a_2)$. Since both $q,p\in \Max(G)$, we conclude that $\Max(G)$ is patch dense in $\Spec(G)$.
\end{proof}

\begin{remark}
By \ref{Yosida}(6), if $\widehat{G}+B \in \Y$, then $G \in \Y$. So if $P$ is patch dense in $\Max(G)$, then $G \in \Y$ if and only if $\widehat{G}+B \in \Y$ by  \ref{main2}.
\end{remark}

The next result characterizes those {\bf W}-objects $(G,u)$ for which $(G+B,u)$ is also a {\bf W}-object.

\begin{proposition}\label{G+BinW}
Let $(G,u)\in {\bf W}$.  The following statements are equivalent.
\begin{enumerate}%[label={\rm \arabic*.}, nolistsep]
\item[(1)] $(\widehat{G}+B,u)\in{\bf W}$. %There exists $P\subseteq YG \cap \Max(G)$ such that $(\widehat{G}+B,u)\in{\bf W}$.
\item[(2)] There exists $P\subseteq YG \cap \Max(G)$ such that $\bigcap P = \{ 0 \}$. %$\bigcap \{ p \in YG : p \in \Max(G) \} = \{ 0 \}$.
\item[(3)] For some set $X$, there exists an embedding of $G$ into $\displaystyle{\R^X}$ such that $u\mapsto {\bf 1}$.
\end{enumerate}
\end{proposition}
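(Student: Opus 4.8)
The plan is to prove $(1)\Leftrightarrow(2)$ by analyzing the $G+B$ construction directly, and $(2)\Leftrightarrow(3)$ using H\"older's theorem together with the induced embeddings of \ref{inducedemb}. Two facts about $G+B$ drive the first equivalence. First, by the observation recorded just before \ref{polar2}, an element $g \in G$ is a weak order unit of $G+B$ precisely when $g \notin P$ for every $P \in \cP$; applied to $u$, this says $u$ is a weak order unit of $G+B$ if and only if $\cP \subseteq U(u)$. Second, $G+B \in \Arch$ if and only if $\cP \subseteq \Max(G)$: for the forward direction, each summand $G_i = G/P_i$ is (via $b \mapsto b(i)$) an $\ell$-subgroup of $B \leq G+B$ and hence inherits archimedeanness, and $G/P$ being totally ordered (as $P$ is prime) and archimedean forces $P \in \Max(G)$ by H\"older; for the reverse direction, $\cP \subseteq \Max(G)$ makes $G+B$ an $\ell$-subgroup of the product $\prod_i G/P_i$ of subgroups of $\R$, which is archimedean.

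For $(1)\Rightarrow(2)$, suppose $(G+B,u) \in {\bf W}$ for some $\cP$ (so $\bigcap \cP = \{0\}$, as required for the construction). The two facts give $\cP \subseteq \Max(G)$ together with $u \notin P$ for all $P \in \cP$. A maximal convex $\ell$-subgroup avoiding $u$ is maximal among convex $\ell$-subgroups not containing $u$, hence a value of $u$, so $\cP \subseteq \{P \in YG : P \in \Max(G)\}$; since $\bigcap \cP = \{0\}$ this yields $\bigcap\{P \in YG : P \in \Max(G)\} = \{0\}$, which is $(2)$. For $(2)\Rightarrow(1)$, take $\cP = \{P \in YG : P \in \Max(G)\}$, which has $\bigcap \cP = \{0\}$ by $(2)$ and consists of primes (in the abelian case $\Max(G)\subseteq\Spec(G)$). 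Forming $G+B$ with this $\cP$, the two facts show that $u$ is a weak order unit of $G+B$ (as $\cP \subseteq YG$ means $u \notin P$ for all $P \in \cP$) and that $G+B$ is archimedean (as $\cP \subseteq \Max(G)$); hence $(G+B,u) \in {\bf W}$.

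For $(2)\Leftrightarrow(3)$, assuming $(3)$ let $\varphi \colon G \to \R^X$ be an embedding with $u \mapsto \mathbf{1}$, and put $M_x = \{g \in G : \varphi(g)(x) = 0\}$. As in the proof of $(2)\Rightarrow(1)$ of \ref{Yosida}, each $M_x$ is either $G$ or a maximal convex $\ell$-subgroup, and since $\varphi(u)(x) = 1 \neq 0$ we have $u \notin M_x$, forcing $M_x \in \Max(G) \cap YG$; injectivity of $\varphi$ gives $\bigcap_x M_x = \{0\}$, which is $(2)$. Conversely, assuming $(2)$, let $\cM = \{P \in YG : P \in \Max(G)\}$; by H\"older each $G/M$ ($M \in \cM$) embeds in $\R$, and since $u \notin M$ with $u \geq 0$ we may rescale the embedding so that the image of $\pi_M(u)$ is $1$. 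The resulting induced embedding $G \to \R^{\cM}$ then sends $u$ to $\mathbf{1}$, giving $(3)$ with $X = \cM$.

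The arguments are largely bookkeeping, and the one point needing genuine care is the equivalence $G+B \in \Arch \Leftrightarrow \cP \subseteq \Max(G)$, in particular its forward direction: one must realize each quotient $G/P$ as an $\ell$-subgroup of $G+B$ in order to transfer archimedeanness down to it. The only other step to watch is the coordinatewise rescaling in $(2)\Rightarrow(3)$, which must be done so that the distinguished unit is carried to the constant function $\mathbf{1}$ rather than to an arbitrary positive constant.
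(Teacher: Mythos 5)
Your proposal is correct and follows essentially the same route as the paper: the same key facts drive everything (archimedeanness of $G+B$ forces each $G/P$, which embeds in $G+B$, to be archimedean and hence $P \in \Max(G)$ by H\"older; $u$ is a weak unit of $G+B$ exactly when $u \notin \bigcup \cP$, so each $P \in \cP$ is a value of $u$; and the choice $\cP = \{P \in YG : P \in \Max(G)\}$ realizes the converse). The only cosmetic differences are that you prove the two biconditionals $(1)\Leftrightarrow(2)$ and $(2)\Leftrightarrow(3)$ separately where the paper runs the cycle $(1)\Rightarrow(2)\Rightarrow(3)\Rightarrow(1)$, and that for $(2)\Rightarrow(3)$ you build the embedding from H\"older's theorem with a coordinatewise rescaling of the unit, whereas the paper obtains the same embedding by restricting the Yosida representation to $\cP$ (where $u$ maps to $\mathbf{1}$ automatically).
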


\begin{proof}
(1) $\Rightarrow$ (2). Suppose (1) holds. Then $\bigcap P = \{ 0 \}$ by hypothesis and $\widehat{G}+B \in \Arch$. Since $G/p$ embeds into $\widehat{G}+B$ for every $p \in P$, it follows that $G/p \in \Arch$ and $p \in \Max(G)$ for every $p \in P$. Now $u \notin p$ for each $p \in P$ because $u$ is a weak unit in $\widehat{G}+B$, so $P \subseteq YG$. Hence (2) holds.

(2) $\Rightarrow$ (3). Suppose (2) holds. Then $P$ is dense in $YG$. In the Yosida representation of $(G,u)$, for each $g\in G$, one has $P\subseteq g^{-1}(\R)$. Therefore, the Yosida representation followed by the restriction map to $P$ produces the desired embedding of $G$ into $\R^{P}$.

(3) $\Rightarrow$ (1). Given an $\ell$-embedding $\psi \colon G \to \R^X$ with $\psi(u)=1$, it follows that $m_x \in \Max(G)$ for every $x \in X$, where $m_x = \psi^{-1}(\{ f \in \R^X : f(x) = 0 \})$. Set $P=\{m_x: x\in X\}$. Clearly each $m_x$ is a value of $u$ and $\bigcap P = \{ 0 \}$. Constructing $\widehat{G}+B$, we gather that $\widehat{G}+B$ is a archimedean and that $u$ is a weak order unit of $\widehat{G}+B$.
\end{proof}

One of the early questions we had was whether the Yosida space of a $\mathbf{W}^*$-object  in $\M$ is necessarily zero-dimensional. The following example shows that this need not be the case.

\begin{example}
Let $G$ be the $\ell$-group from \cite{mz06} described in Example \ref{MZex} above. Recall that $G \in \Y \ssm \M$. Let $u \in G$ be the function given by $u(t) = t+1$. Then $u$ is a strong unit in $G$ and the  $\mathbf{W}^*$-object $(G,u)$ has $Y(G,u)=[0,\infty]$ (the one-point compactification of the interval $[0,\infty) \subseteq \R$). Take $P=\Max(G)$ and construct $\widehat{G}+B$. Then $\widehat{G}+B \in \M$ by Theorem \ref{main2} and $(\widehat{G}+B,u) \in \bf{W}^*$ by Proposition \ref{G+BinW}. Finally, $Y(G,u)$ is homeomorphic to a subspace of $Y(\widehat{G}+B,u)$ because $\widehat{G} + B$ is a retract of $G$, so $Y(\widehat{G}+B)$ is not zero-dimensional.
\end{example}

%%%%%%%%%%%%%%%%%%%%%%%%

\section{Statements and Declarations}

\begin{itemize}
\item The authors contributed equally to the creation of this article.
\item The authors have no competing interests to declare that are relevant to the content of this article.
\end{itemize}

\section{Funding Declarations}
\begin{itemize}
\item Brian Wynne received financial support from PSC-CUNY Research Award \#66070-00 54 and an AMS-Simons Research Enhancement Grant for Primarily Undergraduate Institution Faculty.
\item Papiya Bhattacharjee, Anthony Hager, and Warren McGovern received no financial support.
\end{itemize}

\subsection*{Acknowledgements}

We are grateful to the anonymous reviewer for their detailed and informative report, which helped us to simplify and improve some of our arguments and the overall exposition of the paper.

%%%%%%%%%%%%%%%%%%%%%%%

\end{document}